\newcommand{\R}{{\mathbb R}}
\newcommand{\N}{{\mathbb N}}
\newcommand{\Ht}[2]{\widetilde H^{#1}({#2})}
\newcommand{\Hs}[2]{H^{#1}({#2})}
\newcommand{\Hta}[2]{\wtd H_{\ast}^{#1}(#2)}
\def\bigset#1#2{\Big\{#1\,:\,#2\Big\}}
\newcommand{\inpro}[2]{\left\langle{#1},{#2}\right\rangle}
\newcommand{\norm}[2]{\left\|{#1}\right\|_{#2}}
\newcommand{\bnorm}[2]{\big\|{#1}\big\|_{#2}}
\newcommand{\snorm}[2]{\left|{#1}\right|_{#2}}
\newcommand{\bsnorm}[2]{\big|{#1}\big|_{#2}}
\newcommand{\Bsnorm}[2]{\Big|{#1}\Big|_{#2}}
\newcommand{\set}[2]{\left\{{#1} \, : \, {#2} \right\}}
\newcommand{\vecA}{\boldsymbol{A}}
\newcommand{\vecC}{\boldsymbol{C}}
\newcommand{\vecb}{\boldsymbol{b}}
\newcommand{\vecu}{\boldsymbol{u}}
\newcommand{\vecx}{\boldsymbol{x}}
\DeclareMathOperator*{\argmin}{argmin}
\DeclareMathOperator{\diam}{{diam }}
\DeclareMathOperator{\dist}{{dist }}
\DeclareMathOperator{\supp}{{supp \/}}
\newcommand{\hyp}{\mathfrak{W}}
\newcommand{\cA}{{\mathcal A}}
\newcommand{\cC}{{\mathcal C}}
\newcommand{\cD}{{\mathcal D}}
\newcommand{\cL}{{\mathcal L}}
\newcommand{\cO}{{\mathcal O}}
\newcommand{\cS}{{\mathcal S}}
\newcommand{\cV}{{\mathcal V}}
\newcommand{\cX}{{\mathcal X}}
\newcommand{\wtd}{\widetilde}
\newcommand{\wth}{\widehat}
\newcommand{\ol}{\overline}
\newcommand{\pa}{\partial}
\renewcommand{\emptyset}{\varnothing}
\newcommand{\dr}{\, {\rm d}r}
\newcommand{\ds}{\, {\rm d}s}
\newcommand{\dt}{\, {\rm d}t}
\newcommand{\dx}{\, {\rm d}x}
\newcommand{\dy}{\, {\rm d}y}
\newcommand{\rmd}{\, {\rm d}}
\numberwithin{equation}{section}
\newtheorem{theorem}{Theorem}[section]
\newtheorem{lemma}[theorem]{Lemma}
\newtheorem{corollary}[theorem]{Corollary}
\newtheorem{proposition}[theorem]{Proposition}
\newtheorem{remark}[theorem]{Remark}
\numberwithin{equation}{section}
\begin{document}
\title{
Remarks on Sobolev norms of fractional orders
}
\author{Thanh Tran}
\address{School of Mathematics and Statistics,
         The University of New South Wales,
         Sydney 2052, Australia}
\email{thanh.tran@unsw.edu.au}
\thanks{Supported by the Australian Research Council under grant number DP190101197}


\date{\today}

\begin{abstract}
When a function belonging to a fractional-order Sobolev space is supported in a proper subset of
the Lipschitz domain on which the Sobolev space is defined, how is its Sobolev norm as a function
on the smaller set compared to its norm on the whole domain? On what do the comparison
constants depend on? Do different norms behave differently? This article addresses these
issues. We prove some inequalities and disprove some misconceptions by counter-examples.
\end{abstract}
\maketitle

\section{Introduction}
Sobolev spaces of fractional orders are the key function space for the mathematical and
numerical analysis of boundary integral equation methods. It is well known that these spaces
behave differently with Sobolev spaces of integral orders. Consider, for example, a
domain~$\cO$ in~$\R^n$, $n\ge1$, which is partitioned by two subdomains~$\cO_1$ and~$\cO_2$ so 
that~$\ol\cO=\ol\cO_1\cup\ol\cO_2$ with $\cO_1\cap\cO_2=\emptyset$. If~$u\in H^m(\cO)$
for some non-negative integer~$m$, where~$H^m(\cO)=W_2^{m}(\cO)$ is the normally-defined
Sobolev space with the Lebesgue measure, then
\begin{equation}\label{equ:Hm O1 O2}
\norm{u}{H^m(\cO)}^2 = \norm{u|_{\cO_1}}{H^m(\cO_1)}^2 + \norm{u|_{\cO_2}}{H^m(\cO_2)}^2.
\end{equation}
It is also obvious that if~$u\in H^m(\cO)$ with~$\supp(u)\subset\ol\cO_1$, then
\begin{equation}\label{equ:Hm O1}
\norm{u}{H^m(\cO_1)} = \norm{u}{H^m(\cO)}.
\end{equation}
Are the above properties true for fractional-order Sobolev spaces? 

Pushing back until the next section the precise definitions of the fractional-order Sobolev
spaces~$\Ht{s}{\cO}$ and~$\wtd W_2^s(\cO)$, $s>0$, we mention here that these two
spaces coincide when~$s-1/2\in\N$, the space of non-negative integers; see
Subsection~\ref{subsec:equ nor}. Their corresponding
norms, denoted by~$\norm{\cdot}{\Ht{s}{\cO}}$ and~$\norm{\cdot}{\sim,s,\cO}$, respectively, are
equivalent, i.e., there exist positive constants~$C_1$ and~$C_2$ satisfying
\[
C_1 \norm{u}{\Ht{s}{\cO}} \le \norm{u}{\sim,s,\cO} \le C_2 \norm{u}{\Ht{s}{\cO}} 
\quad\forall u\in\Ht{s}{\cO}.
\]
We first make a remark (Proposition~\ref{pro:A 2 nor 3 nor}) on how the constants~$C_1$
and~$C_2$ depend on the size of the domain~$\cO$.

An issue with the Sobolev norm~$\norm{\cdot}{\Ht{s}{\cO}}$ is that instead of~\eqref{equ:Hm O1
O2} it is known that
\begin{equation}\label{equ:Hs O1 O2}
\norm{u}{\Ht{s}{\cO}}^2 \le 
\norm{u|_{\cO_1}}{\Ht{s}{\cO_1}}^2 + \norm{u|_{\cO_2}}{\Ht{s}{\cO_2}}^2,
\end{equation}
provided that all norms are well defined.
This result is proved in~\cite{AinMcLTra,Pet}. A counter-example in~\cite{AinGuo00} shows that
the opposite inequality is not true. This means there is no universal constant~$C$ independent
of~$u$, $\cO_1$, $\cO_2$, and~$\cO$ such that
\begin{equation}\label{equ:wro}
\norm{u|_{\cO_1}}{\Ht{s}{\cO_1}}^2 + \norm{u|_{\cO_2}}{\Ht{s}{\cO_2}}^2
\le C \norm{u}{\Ht{s}{\cO}}^2.
\end{equation}
Inequality~\eqref{equ:Hs O1 O2} and the non-existence of~\eqref{equ:wro} imply that
if~$u\in\Ht{s}{\cO}$ is supported in~$\ol\cO_1$ then, instead of~\eqref{equ:Hm
O1}, we have in general
\[
\norm{u}{\Ht{s}{\cO}} < \norm{u}{\Ht{s}{\cO_1}}.
\]
We establish in Theorem~\ref{the:A Sob loc nor} that when~$s=1/2$ (a commonly-seen case)
\[
\norm{u}{\sim,s,\cO} \le C \norm{u}{\sim,s,\cO_1}
\]
but there is no constant~$C'$ independent of~$\cO$ and~$\cO_1$ 
satisfying
\[
\norm{u}{\sim,s,\cO_1} \le C' \norm{u}{\sim,s,\cO}
\]
for all~$u\in\wtd W_2^s(\cO)$ supported in~$\cO_1$.

A third issue arises from the analysis of domain decomposition methods for boundary integral
equations of the first kind. Consider for example the hypersingular integral equation; see
Section~\ref{sec:app} for detail. It is known that the bilinear form~$a(\cdot,\cdot)$
arising from this operator defines a norm equivalent to the $\Ht{1/2}{\cO}$-norm. One of the
requirements in the analysis is a proof of some inequality of the form
\begin{equation}\label{equ:a O1 O2}
a(u,u) \le C \big( a(u|_{\cO_1},u|_{\cO_1}) + a(u|_{\cO_2},u|_{\cO_2}) \big).
\end{equation}
Due to a misconception that the norm~$\sqrt{a(u|_{\cO_j},u|_{\cO_j})}$ is equivalent
to~$\norm{u|_{\cO_j}}{\Ht{1/2}{\cO_j}}$, $j=1,2$, inequality~\eqref{equ:Hs O1 O2} has
been used ubiquitously in the literature to obtain the above estimate. Proposition~\ref{pro:A 2
nor 3 nor} and Theorem~\ref{the:A Sob loc nor} imply that this equivalence is at the
cost of the equivalence constants depending on the size of the subdomain~$\cO_j$. This
may adversely affect the final result; see Section~\ref{sec:app} for detail. We prove in
Theorem~\ref{the:A tvp 2} that~\eqref{equ:Hs O1 O2} can be improved to ensure the
following estimate
\[
\norm{u}{\Ht{s}{\cO}}^2 \le 
\norm{u|_{\cO_1}}{\Ht{s}{\cO}}^2 + \norm{u|_{\cO_2}}{\Ht{s}{\cO}}^2,
\]
assuming that~$u|_{\cO_j}$ is extended by zero to the exterior of~$\cO_j$, $j=1,2$.
This inequality is the right tool to prove~\eqref{equ:a O1 O2}.

The remainder of the article is organised as follows. In Section~\ref{sec:Sob nor} we present
the precise definitions of the Sobolev spaces and norms in consideration. The main results are
stated in Section~\ref{sec:mai res}, the proofs of which are performed in Section~\ref{sec:pro}.
Section~\ref{sec:app} discusses applications of these results.

In the sequel, if~$a \le c b$ where the constant~$c$ is independent of the parameters in
concern, for example, the functions and the size of the domain, we will write~$a \lesssim b$.
We will also write~$a \simeq b$ if~$a \lesssim b$ and~$b \lesssim a$.
\section{Sobolev norms}\label{sec:Sob nor}
In this section we recall the definitions of Sobolev spaces of fractional orders that we
are interested in.
Let~$\cO$ be a generic bounded and connected domain in~$\R^n$, $n\ge1$, with Lipschitz boundary,
and let~$r$ be a positive integer. 
The spaces~$L^2(\cO)$ is the space of Lebesgue squared-integrable functions defined on~$\cO$,
with norm denoted by~$\norm{\cdot}{0,\cO}$. The space $W_2^r(\cO)=H^r(\cO)$ is defined by
\begin{align*}
W_2^r(\cO) &= H^r(\cO)
:= \set{u\in L^2(\cO)}{\norm{u}{r,\cO}<\infty}
\end{align*}
where
\begin{align*}
\norm{u}{r,\cO}
&:=
\Big( \sum_{|\alpha|=0}^r \norm{\pa^\alpha u}{0,\cO}^2 \Big)^{1/2}.
\end{align*}
Here, $\alpha=(\alpha_1,\cdots,\alpha_n)\in\N^n$ is a multi-index,
$|\alpha|=\alpha_1+\cdots+\alpha_n$,
and~$\pa^\alpha u=\pa^{|\alpha|}u/\pa_{x_1}^{\alpha_1}\cdots\pa_{x_n}^{\alpha_n}$ is the
$|\alpha|$-order partial derivative of~$u$. 

The space~$\mathring{W}_2^r(\cO)=H_0^r(\cO)$ is defined as the closure
of~$\cD(\cO)$ in~$H^r(\cO)$, where~$\cD(\cO)$ is the space of all~$\cC^\infty$ functions
with compact support in~$\cO$. Thanks to Poincar\'e's inequality, the seminorm
\[
\snorm{u}{r,\cO}
:=
\Big( \sum_{|\alpha|=r} \norm{\pa^\alpha u}{0,\cO}^2 \Big)^{1/2}
\]
can be used as a norm on~$H_0^r(\cO)$.

In the sequel, we define the fractional-order Sobolev spaces of order~$s>0$; see e.g.
\cite{Ada75,Gri85}.

\subsection{Real interpolation spaces~$\Ht{s}{\cO}$ for $s>0$}\label{subsec:int spa}
The general method for constructing real interpolation spaces can be found in~\cite{BerLof}.
For any~$u\in L^2(\cO)$ and any~$t>0$, the functional~$K(t,u)$ is defined by
\begin{equation}\label{equ:K fun}
K(t,u) := \inf_{(u_0,u_1)\in\cX(u)}
\left(\norm{u_0}{0,\cO}^2 + t^2\snorm{u_1}{r,\cO}^2\right)^{1/2}
\end{equation}
where
\begin{equation}\label{equ:Xu}
\cX(u) = \set{(u_0,u_1)\in L^2(\cO) \times H_0^r(\cO) }{u_0+u_1=u}.
\end{equation}
For~$s\in(0,r)$, with~$\theta=s/r$ we define the interpolation
space~$[L^2(\cO),H_0^r(\cO)]_{\theta}$ by
\[
[L^2(\cO),H_0^r(\cO)]_{\theta} :=
\set{u\in L^2(\cO)}{\norm{u}{[L^2(\cO),H_0^r(\cO)]_{\theta}} < \infty}
\]
where
\[
\norm{u}{[L^2(\cO),H_0^r(\cO)]_{\theta}} :=
\left(
\int_0^\infty |t^{-\theta}K(t,u)|^2 \frac{\dt}{t}
\right)^{1/2}.
\]
We follow~\cite{Gri85} to denote this space by~$\Ht{s}{\cO}$ and equipped it with the norm
\[
\norm{u}{\Ht{s}{\cO}} := \norm{u}{[L^2(\cO),H_0^r(\cO)]_{\theta}}.
\]

If~$s-1/2\notin\N$ (the set of non-negative integers) then~$\wtd H^s(\cO)=H_0^s(\cO)$, the closure
of~$\cD(\cO)$ in~$H^s(\cO)$. Here, the space~$\Hs{s}{\cO}:=[L^2(\cO),H^r(\cO)]_{\theta}$
is defined by interpolation as is~$\Ht{s}{\cO}$, with the $K$-functional defined by using the
norm~$\norm{u_1}{r,\cO}$ instead of the seminorm~$\snorm{u_1}{r,\cO}$.

If~$s-1/2\in\N$, then $\Ht{s}{\cO}$ is a proper subset
of~$H_0^s(\cO)$ and is denoted by~$H_{00}^s(\cO)$ in~\cite{LioMag72}. We follow~\cite{Gri85} to
use the same notation~$\wtd H^s(\cO)$ in both cases.

A special case is when~$u\in\Ht{s}{\cO}$ is supported in~$\ol\cO_1$, where~$\cO_1$ is a bounded
Lipschitz domain which is a proper subset of~$\cO$. The function~$u$ also belongs
to~$\Ht{s}{\cO_1}$. We want to compare the norm~$\norm{u}{\Ht{s}{\cO_1}}$
with~$\norm{u}{\Ht{s}{\cO}}$.

First we clarify 
how the norm~$\norm{u}{\Ht{s}{\cO_1}}$ is defined.
For~$r\in\N\setminus\{0\}$, we define two spaces
\begin{align*}
A_0 &:= \set{u\in L^2(\cO)}{\supp(u)\subseteq\ol\cO_1},
\\
A_1 &:= \set{u\in H_0^r(\cO)}{\supp(u)\subseteq\ol\cO_1},
\end{align*}
which form a compatible couple~$\cA=(A_0,A_1)$. By zero extension, we can
identify~$L^2(\cO_1)$ and~$H_0^r(\cO_1)$ with~$A_0$ and~$A_1$, respectively. For
any~$u\in A_0$ we define, similarly to~\eqref{equ:Xu},
\begin{equation}\label{equ:Xu1}
\cX_1(u) := \set{(u_0,u_1)\in A_0 \times A_1}{u_0+u_1=u}.
\end{equation}

We also define two functionals~$J : \R^+ \times L^2(\cO)\times H_0^r(\cO)\to\R$
and~$J_1 : \R^+ \times A_0 \times A_1 \to \R$ by
\begin{equation}\label{equ:A JJ1}
\begin{alignedat}{2}
J(t,u_0,u_1) &:= \norm{u_0}{0,\cO}^2 + t^2\snorm{u_1}{r,\cO}^2, &\quad
& t>0, \ u_0 \in L^2(\cO), \ u_1 \in H_0^r(\cO),
\\
J_1(t,u_0,u_1) &:= \norm{u_0}{0,\cO_1}^2 + t^2\snorm{u_1}{r,\cO_1}^2, &\quad
& t>0, \ u_0 \in A_0, \ u_1 \in A_1.
\end{alignedat}
\end{equation}
The $K$-functional defined in~\eqref{equ:K fun} can be written as
\begin{equation}\label{equ:Ktu}
K(t,u) = \inf\set{J^{1/2}(t,u_0,u_1)}{(u_0,u_1)\in\cX(u)}.
\end{equation}
Correspondingly, we define
\begin{equation}\label{equ:K1tu}
K_1(t,u) = \inf\set{J^{1/2}_1(t,u_0,u_1)}{(u_0,u_1)\in\cX_1(u)}
\end{equation}
and the corresponding norm
\begin{equation}\label{equ:A0 A1 nor}
\norm{u}{[A_0,A_1]_{\theta}} :=
\left(
\int_0^\infty |t^{-\theta}K_1(t,u)|^2 \frac{\dt}{t}
\right)^{1/2}, \quad \theta\in(0,1).
\end{equation}

We note that~$\cX_1(u)$ is a proper subset of~$\cX(u)$ because in the definition
of~$\cX(u)$ for~$u\in A_0$, the two functions~$u_0$ and~$u_1$ do not have to be zero
in~$\cO_2:=\cO\setminus\ol\cO_1$, but $u_0=-u_1$ in~$\ol\cO_2$. Consequently, for any~$t>0$,
\[
K(t,u) \le K_1(t,u).
\]
We will prove later that in general this is indeed a strict inequality.

Using the norm defined by~\eqref{equ:A0 A1 nor} we can define the interpolation space
\[
[A_{0},A_{1}]_{\theta} :=
\set{u\in A_{0}}{\norm{u}{[A_{0},A_{1}]_{\theta}}<\infty}.
\]
This space is the usual space~$\Ht{s}{\cO_1}$ which is equipped with the
norm~$\norm{u}{\Ht{s}{\cO_1}}=\norm{u}{[A_{0},A_{1}]_{\theta}}$, where~$s=\theta r$.
Denote
\[
\wtd H_{\ast}^{s}(\cO_1) :=
\set{u\in A_0}{\norm{u}{\Ht{s}{\cO}} < \infty}
\quad\text{and}\quad
\norm{u}{\wtd H_{\ast}^s(\cO_1)} := \norm{u}{\Ht{s}{\cO}}.
\]
Clearly~$\Hta{s}{\cO_1}$ is a proper subset of~$\Ht{s}{\cO}$.
We will prove in Subsection~\ref{subsec:pro inc} that the following subset inclusion
is proper
\begin{equation}\label{equ:A Hs Hss}
\wtd H^{s}(\cO_1)\subsetneq\wtd H_{\ast}^{s}(\cO_1). 
\end{equation}

\subsection{Sobolev--Slobodetski spaces~$W_2^s(\cO)$ and~$\wtd W_2^s(\cO)$ for $s>0$}
Let~$s=m+\sigma$ with~$m\in\N$ and~$\sigma\in(0,1)$. For every function~$u$
defined in~$\cO$, we define
\begin{align*}
\norm{u}{m,\cO} &:= 
\Big( \displaystyle\sum_{|\alpha|=0}^m \norm{\pa^\alpha u}{0,\cO}^2 \Big)^{1/2},
\\[1ex]
\snorm{u}{\sigma,\cO} &:= 
\Big(
\iint_{\cO\times \cO} \frac{|u(x)-u(y)|^2}{|x-y|^{n+2\sigma}}\dx\dy 
\Big)^{1/2}.
\\[1ex]
\norm{u}{s,\cO} &:=
\Big( \norm{u}{m,\cO}^2 + \sum_{|\alpha|=m} \snorm{\pa^\alpha u}{\sigma,\cO}^2 \Big)^{1/2}.
\end{align*}
The space~$W_2^s(\cO)$ is the space of all functions~$u$ defined in~$\cO$ such that
\(
\norm{u}{s,\cO} < \infty.
\)
The space~$\mathring{W}_2^s(\cO)$ is defined to be the closure of~$\cD(\cO)$
in~$W_2^s(\cO)$. Using this space, we define
\[
\wtd W_2^s(\cO) :=
\set{u\in\mathring{W}_2^s(\cO)}{\pa^\alpha u/\rho^\sigma\in L^2(\cO), \ |\alpha|=m}
\]
where~$\rho(x):=\dist(x,\partial \cO)$ is the distance from~$x$ to the
boundary~$\partial \cO$ of~$\cO$. This space is equipped with the norm
\[
\norm{u}{\sim,s,\cO} :=
\Big(
\norm{u}{s,\cO}^2 + 
\sum_{|\alpha|=m}\int_\cO \frac{|\pa^\alpha u(x)|^2}{\rho^{2\sigma}(x)} \dx
\Big)^{1/2}.
\]
We note that when~$s=\sigma\in(0,1)$, i.e., $m=0$, the norm can also be defined by
\begin{equation}\label{equ:m0}
\norm{u}{\sim,s,\cO} := 
\Big(
\snorm{u}{s,\cO}^2 + 
\int_\cO \frac{|u(x)|^2}{\rho^{2\sigma}(x)} \dx
\Big)^{1/2}.
\end{equation}
The advantage of this norm is that the two terms defining the norm scale similarly when the
domain~$\cO$ is rescaled; see Proposition~\ref{pro:A 2 nor 3 nor}.

\subsection{Equivalence of norms}\label{subsec:equ nor}
If~$s=m+1/2$ for~$m\in\N$, then~$\Ht{s}{\cO}=\wtd W_2^s(\cO)$;
see~\cite[Theorem~11.7, page~66]{LioMag72}. Moreover,
\begin{equation}\label{equ:Lio Mag}
\norm{u}{\Ht{s}{\cO}} \simeq \norm{u}{\sim,s,\cO}.
\end{equation}

It is sometimes important to know how the constants depend on the size of~$\cO$.
In the sequel, we assume that all domains are ``shape regular'', i.e., we avoid long and thin
shapes. We first explain the motivation of this assumption, via an example, and give a more
precise description of ``shape-regular'' domains.

Let~$\Omega$ be a subset of~$\R^3$ which defines the geometry for equation~\eqref{equ:hyp} in
Section~\ref{sec:app}. A simple example is~$\Omega=(0,1)\times(0,1)\times\{0\}$.
The solution~$\varphi$ to equation~\eqref{equ:hyp} is sought numerically by
finding~$\varphi_M$ which solves~\eqref{equ:Gal equ}. The numerical scheme is designed
such that~$\varphi_M$ converges to~$\varphi$ in~$\Ht{1/2}{\Omega}$ as~$M\to\infty$.

In the boundary element approximation, we compute~$\varphi_M$ by
first partitioning~$\Omega$ into subdomains~$\Omega_1$, \ldots, $\Omega_M$ (called
boundary elements) which can be
rectangles, quadrilaterals, or triangles, and then compute~$\varphi_M$ as a continuous
piecewise polynomial function on this partition. The partitioning is repeated during the
solution process to increase accuracy of the approximation. Note 
that~$\max_{1\le j\le M}\diam(\Omega_j)\to0$ as~$M\to\infty$. Very often in practice,
the partitioning is carried out
in such a way that there exists a constant~$C$ independent of~$M$ satisfying
\begin{equation}\label{equ:sha reg}
\max_{1\le j\le M}\frac{\diam(\Omega_j)}{\varrho(\Omega_j)} \le C,
\end{equation}
where~$\varrho(\Omega_j)$ is the radius of the largest inscribed ball in~$\Omega_j$. Such
domains~$\Omega_j$ are called shape-regular boundary elements. A simple partition
for~$\Omega$ mentioned above which satisfies~\eqref{equ:sha reg} is a uniform partition. 

With these applications in mind, in the remainder of this paper, we are only concerned
with the diameters of the subdomains. A study involving the constant~$C$
in~\eqref{equ:sha reg}, or involving both~$\diam(\Omega_j)$ and~$\varrho(\Omega_j)$
when~\eqref{equ:sha reg} is not satisfied, is possible
but it is not in the interest of this paper. Techniques for this consideration
can be found in~\cite{DupSco80,Heu14}. It should be noted
that~$\Omega$ is the given geometry and therefore~$\diam(\Omega)$ is fixed in the whole
solution process. We are not concerned with this quantity.

In the next proposition, we denote by~$\cO$ any subdomain~$\Omega_j$ satisfying~\eqref{equ:sha
reg}. We study the scaling property of the two norms~$\norm{\cdot}{\Ht{s}{\cO}}$
and~$\norm{\cdot}{\sim,s,\cO}$, and show how this property depends on~$\diam(\cO)$,
which can be assumed to be smaller than~$1$ because~$\diam(\Omega_j)\to0$ as described
above. 

\begin{proposition}\label{pro:A 2 nor 3 nor}
Assume that $\cO$ is a domain in~$\R^n$ with Lipschitz boundary, satisfying
$\tau:=\diam(\cO)<1$. The following statements hold true.
\begin{enumerate}
\renewcommand{\labelenumi}{\theenumi}
\renewcommand{\theenumi}{{\rm (\roman{enumi})}}
\item
If~$s=m+1/2$ with~$m=1,2,\ldots$, then
\[
\tau^{2s}
\norm{u}{\Ht{s}{\cO}}^2
\lesssim
\norm{u}{\sim,s,\cO}^2
\lesssim
\norm{u}{\Ht{s}{\cO}}^2
\quad\forall u \in \Ht{s}{\cO}.
\]
\item
If $s=1/2$ and if we define~$\norm{u}{\sim,1/2,\cO}$ by~\eqref{equ:m0}, i.e.,
\begin{equation}\label{equ:H12}
\norm{u}{\sim,1/2,\cO}
=
\Big(
\iint_{\cO\times \cO} \frac{|u(x)-u(y)|^2}{|x-y|^{n+1}}\dx\dy 
+
\int_\cO \frac{|u(x)|^2}{\rho(x)} \dx
\Big)^{1/2},
\end{equation}
then
\begin{equation}\label{equ:H12 Sob}
\norm{u}{\Ht{1/2}{\cO}} \simeq
\norm{u}{\sim,1/2,\cO}
\quad\forall u \in \Ht{1/2}{\cO}.
\end{equation}
\end{enumerate}
The constants are independent of $u$ and $\tau$.
\end{proposition}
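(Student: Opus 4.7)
The plan is an affine scaling argument that reduces both statements to the known equivalence~\eqref{equ:Lio Mag} on a reference domain of unit diameter. Set $\hat\cO:=\tau^{-1}\cO$ (up to a translation) and $\hat u(\hat x):=u(\tau\hat x)$. A routine change of variables, using the chain rule together with $\rho(x)=\tau\hat\rho(\hat x)$, yields the scalings
\[
\norm{\pa^\alpha u}{0,\cO}^2=\tau^{n-2|\alpha|}\norm{\pa^\alpha\hat u}{0,\hat\cO}^2,\qquad
\snorm{\pa^\alpha u}{\sigma,\cO}^2=\tau^{n-2|\alpha|-2\sigma}\snorm{\pa^\alpha\hat u}{\sigma,\hat\cO}^2,
\]
and an analogous identity for the weighted term involving $\rho^{-2\sigma}$. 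For the interpolation norm, substituting $t=\tau^r\hat t$ inside the $K$-functional~\eqref{equ:K fun} and rescaling the admissible decompositions gives $K(t,u)=\tau^{n/2}\hat K(\tau^{-r}t,\hat u)$, whence $\norm{u}{\Ht{s}{\cO}}^2=\tau^{n-2s}\norm{\hat u}{\Ht{s}{\hat\cO}}^2$. On $\hat\cO$, the hypothesis~\eqref{equ:Lio Mag} supplies $\norm{\hat u}{\Ht{s}{\hat\cO}}\simeq\norm{\hat u}{\sim,s,\hat\cO}$ with constants independent of $\tau$.

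For part~(ii), every term in the definition~\eqref{equ:m0} is a top-order contribution scaling uniformly as $\tau^{n-1}=\tau^{n-2s}$, so
\[
\norm{u}{\sim,1/2,\cO}^2=\tau^{n-1}\norm{\hat u}{\sim,1/2,\hat\cO}^2\simeq\tau^{n-1}\norm{\hat u}{\Ht{1/2}{\hat\cO}}^2=\norm{u}{\Ht{1/2}{\cO}}^2,
\]
with $\tau$-independent constants, which gives~\eqref{equ:H12 Sob}.

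For part~(i) with $s=m+1/2$, $m\ge1$, the Sobolev--Slobodetski norm splits under scaling as
\[
\norm{u}{\sim,s,\cO}^2=\sum_{|\alpha|\le m}\tau^{n-2|\alpha|}\norm{\pa^\alpha\hat u}{0,\hat\cO}^2+\tau^{n-2s}B(\hat u),
\]
where $B(\hat u)$ collects the top-order Slobodetski and weighted seminorms on $\hat\cO$. The exponent mismatch between the lower-order pieces (with factor $\tau^{n-2|\alpha|}$, $|\alpha|\le m<s$) and the top-order pieces (with factor $\tau^{n-2s}$) is exactly what produces the loss $\tau^{2s}$ in the lower bound. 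Since $\tau<1$ forces $\tau^{n-2|\alpha|}\le\tau^{n-2s}$ for every $|\alpha|\le m$, each summand is dominated by $\tau^{n-2s}\norm{\hat u}{\sim,s,\hat\cO}^2\simeq\norm{u}{\Ht{s}{\cO}}^2$, yielding the upper bound. For the lower bound, multiplying through by $\tau^{2s}$ gives $\tau^{2s}\norm{u}{\Ht{s}{\cO}}^2\simeq\tau^n\bigl(\norm{\hat u}{m,\hat\cO}^2+B(\hat u)\bigr)$, and the opposite inequalities $\tau^n\le\tau^{n-2|\alpha|}$ and $\tau^n\le\tau^{n-2s}$ absorb each term piece by piece into the corresponding contribution of $\norm{u}{\sim,s,\cO}^2$.

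The main obstacle is that the equivalence constants in~\eqref{equ:Lio Mag} used on the reference domain $\hat\cO$ must be chosen independently of the specific rescaled shape, since $\hat\cO$ varies with $\cO$. This is furnished by the standing shape-regularity hypothesis~\eqref{equ:sha reg}, which confines the rescaled domains $\hat\cO$ to a controlled family of unit-diameter Lipschitz shapes on which a uniform Lions--Magenes-type equivalence is available.
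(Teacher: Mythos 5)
Your proof is correct and follows essentially the same route as the paper: rescale to a reference domain $\hat\cO$ of unit diameter, compute how each piece of the Slobodetski-type norm scales, derive the exact scaling $\norm{u}{\Ht{s}{\cO}}^2=\tau^{n-2s}\norm{\hat u}{\Ht{s}{\hat\cO}}^2$, and invoke the Lions--Magenes equivalence \eqref{equ:Lio Mag} on the reference domain. The one cosmetic difference is that you derive the interpolation-norm scaling by explicitly substituting $t\mapsto\tau^{-r}t$ into the $K$-functional, whereas the paper states the $L^2$ and $H_0^r$ scalings and then says ``by interpolation''; these are the same computation, and your version actually makes the equality (rather than mere two-sided bound) transparent. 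You are also a bit more careful than the paper in flagging that the $\hat\cO$-dependence of the constants in \eqref{equ:Lio Mag} is tamed by the standing shape-regularity hypothesis — the paper glosses this by noting only that $\diam(\hat\cO)=1$.
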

\begin{proof}
First we consider~$s=m+\sigma$ for~$m\in\N$ and~$\sigma\in(0,1)$, and show how each norm
scales when the domain~$\cO$ is rescaled.
Let~$\wth\cO$ be a reference set of diameter~$1$ satisfying
\[
\wth x\in\wth\cO \iff \wth x = x/\tau, \quad x\in\cO,
\]
and let~$\wth u:\wth\cO\to\R$ be defined by~$\wth u(\wth x) = u(x)$ for
all~$\wth x\in\wth\cO$ and~$x\in\cO$. Simple calculations reveal
\[
\norm{u}{0,\cO}^2
=
\tau^n \norm{\wth u}{0,\wth\cO}^2
\quad\text{and}\quad
\pa_{\wth x}^\alpha \wth u(\wth x)
=
\tau^{|\alpha|}
\widehat{\pa_{x}^\alpha u}(\wth x)
\]
Hence, for~$m\in\N$,
\begin{align*}
\norm{u}{m,\cO}^2
&=
\sum_{|\alpha|=0}^m \norm{\pa_x^{\alpha}u}{0,\cO}^2
=
\tau^n
\sum_{|\alpha|=0}^m \bnorm{\wth{\pa_x^{\alpha}u}}{0,\wth\cO}^2
=
\tau^{n}
\sum_{|\alpha|=0}^m \tau^{-2|\alpha|}
\norm{\pa_{\wth x}^{\alpha}\wth{u}}{0,\wth\cO}^2.
\end{align*}
On the other hand, for~$\sigma\in(0,1)$ and~$|\alpha|=m$
\begin{align*}
\iint_{\cO\times\cO}
\frac{\snorm{\pa_x^\alpha u(x)-\pa_x^\alpha u(y)}{}^2}%
{|x-y|^{n+2\sigma}}
\dx \dy
&=
\iint_{\wth\cO\times\wth\cO}
\frac{\tau^{-2m} \bsnorm{\wth{\pa_{x}^\alpha u}(\wth x)
-\wth{\pa_{x}^\alpha u}(\wth y)}{}^2}%
{\tau^{n+2\sigma}|\wth x-\wth y|^{n+2\sigma}}
\, \tau^{2n} \rmd\wth x \, \rmd\wth y
\\
&=
\tau^{n-2s}
\iint_{\wth\cO\times\wth\cO}
\frac{\snorm{\pa_{\wth x}^\alpha\wth u(\wth x)
-\pa_{\wth x}^\alpha\wth u(\wth y)}{}^2}%
{|\wth x-\wth y|^{n+2\sigma}}
\, \rmd\wth x \, \rmd\wth y
\end{align*}
and, since~$\rho(x)=\dist(x,\partial\cO)=\tau\dist(\wth x,\partial\wth\cO)=\tau\wth\rho(\wth x)$,
\begin{align*}
\int_\cO \frac{|\pa_{x}^\alpha u(x)|^2}{\rho^{2\sigma}(x)} \dx
&=
\int_{\wth\cO} \frac{\tau^{-2m}
|\pa_{\wth x}^\alpha\wth u(\wth x)|^2}{\tau^{2\sigma}\wth\rho^{2\sigma}(\wth x)} 
\tau^{n} \rmd\wth x
=
\tau^{n-2s}
\int_{\wth\cO} \frac{|\pa_{\wth x}^\alpha\wth u(\wth x)|^2}{\wth\rho^{2\sigma}(\wth x)} 
\rmd\wth x.
\end{align*}
Consequently,
\begin{align*}
\norm{u}{\sim,s,\cO}^2
&=
\norm{u}{m,\cO}^2
+
\sum_{|\alpha|=m}
\iint_{\cO\times\cO}
\frac{\snorm{\pa_x^\alpha u(x)-\pa_x^\alpha u(y)}{}^2}%
{|x-y|^{n+2\sigma}}
\dx \dy
+
\sum_{|\alpha|=m}
\int_\cO \frac{|\pa_{x}^\alpha u(x)|^2}{\rho^{2\sigma}(x)} \dx
\\
&=
\tau^{n}
\sum_{|\alpha|=0}^m \tau^{-2|\alpha|}
\norm{\pa_{\wth x}^{\alpha}\wth{u}}{0,\wth\cO}^2
+
\tau^{n-2s}
\sum_{|\alpha|=m}
\iint_{\wth\cO\times\wth\cO}
\frac{\snorm{\pa_{\wth x}^\alpha\wth u(\wth x)
-\pa_{\wth x}^\alpha\wth u(\wth y)}{}^2}%
{|\wth x-\wth y|^{n+2\sigma}}
\, \rmd\wth x \, \rmd\wth y
\\
&\quad
+
\tau^{n-2s}
\sum_{|\alpha|=m}
\int_{\wth\cO} \frac{|\pa_{\wth x}^\alpha\wth u(\wth x)|^2}{\wth\rho^{2\sigma}(\wth x)} 
\rmd\wth x.
\end{align*}
Therefore,
\begin{equation}\label{equ:sca W}
\tau^n \norm{\wth u}{\sim,s,\wth \cO}^2
\le
\norm{u}{\sim,s,\cO}^2
\le
\tau^{n-2s} \norm{\wth u}{\sim,s,\wth \cO}^2.
\end{equation}
When~$m=0$ if we define the $\norm{\cdot}{\sim,s,\cO}$-norm by~\eqref{equ:H12} then
\begin{equation}\label{equ:s12}
\norm{u}{\sim,s,\cO}^2
=
\tau^{n-1} \norm{\wth u}{\sim,s,\wth \cO}^2.
\end{equation}

For the interpolation norm, we have
\[
\norm{u}{L^2(\cO)}^2 = \tau^n \norm{\wth u}{L^2(\wth\cO)}^2
\]
and
\[
\norm{u}{H_0^r(\cO)}^2 
= \sum_{|\alpha|=r} \norm{\pa^\alpha u}{L^2(\cO)}^2 
= \tau^{n-2r} \sum_{|\alpha|=r} \norm{\pa_{\wth x}^\alpha \wth u}{L^2(\wth\cO)}^2 
= \tau^{n-2r} \norm{\wth u}{H_0^r(\wth \cO)}^2.
\]
By interpolation
\begin{equation}\label{equ:sca H}
\norm{u}{\Ht{s}{\cO}}^2 = \tau^{n-2s} \norm{\wth u}{\Ht{s}{\wth\cO}}^2,
\quad 0\le s\le r.
\end{equation}

Now consider the case when~$s=m+1/2$. Using~\eqref{equ:Lio Mag}, \eqref{equ:sca W},
and~\eqref{equ:sca H}, we deduce
\begin{align*}
\norm{u}{\sim,s,\cO}^2
\le
\tau^{n-2s} \norm{\wth u}{\sim,s,\wth \cO}^2
\simeq
\tau^{n-2s} \norm{\wth u}{\Ht{s}{\wth\cO}}^2
=
\norm{u}{\Ht{s}{\cO}}^2
\end{align*}
and
\begin{align*}
\norm{u}{\Ht{s}{\cO}}^2
=
\tau^{n-2s} \norm{\wth u}{\Ht{s}{\wth\cO}}^2
\simeq
\tau^{n-2s} \norm{\wth u}{\sim,s,\wth \cO}^2
\le
\tau^{-2s} \norm{u}{\sim,s,\cO}^2,
\end{align*}
yielding the first part of the lemma. The constants in the above equivalences~$\simeq$
are the constants in~\eqref{equ:Lio Mag}, which depend on the size of~$\wth\cO$.
Recall that~$\diam(\wth\cO)=1$.

In the case when~$s=1/2$ with norm defined
by~\eqref{equ:H12}, it follows from~\eqref{equ:s12} and~\eqref{equ:sca H} that
\[
\norm{u}{\Ht{1/2}{\cO}}^2
=
\tau^{n-1} \norm{\wth u}{\Ht{1/2}{\wth\cO}}^2
\simeq
\tau^{n-1} \norm{\wth u}{\sim,1/2,\wth \cO}^2
=
\norm{u}{\sim,1/2,\cO}^2,
\]
completing the proof of the lemma.
\end{proof}

The following theorem concerning properties of the $\Ht{s}{\Omega}$ norms is proved in
\cite[Lemma 3.2]{Pet} and in~\cite[Theorem~4.1]{AinMcLTra}.
\begin{theorem} \label {the:A tvp}
Let $\{\Omega_1, \ldots,\Omega_N\}$ be a partition of a bounded Lipschitz
domain~$\Omega$ into non-overlapping Lipschitz domains. For~$0\le s\le r$, the
following inequalities hold (assuming that all the norms are well defined)
\begin{equation}\label {equ:A tvp2}
\|u\|_{\wtd H^s(\Omega)}^2 
\le 
\sum_{j=1}^N \|u|_{\Omega_j}\|_{\wtd H^s(\Omega_j)}^2.
\end{equation}
\end{theorem}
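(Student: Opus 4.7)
The plan is to work directly with the definition of $\wtd H^s(\Omega)$ as a real interpolation space; the main task is to prove a pointwise-in-$t$ inequality for the $K$-functional, after which~\eqref{equ:A tvp2} follows by integrating against $t^{-2\theta-1}\dt$ with $\theta = s/r$ (the edge cases $s=0$ and $s=r$ being trivial). Denoting by $K_j(t,\cdot)$ the $K$-functional for $u|_{\Omega_j}$ defined analogously to~\eqref{equ:K1tu} with $\Omega_1$ replaced by $\Omega_j$, the target is
\[
K(t,u)^2 \le \sum_{j=1}^N K_j\big(t, u|_{\Omega_j}\big)^2 \qquad \text{for every } t>0.
\]

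Fix $t>0$ and $\eps>0$. For each $j$, select a decomposition $u|_{\Omega_j} = v_{0,j} + v_{1,j}$ with $v_{0,j}\in L^2(\Omega_j)$ and $v_{1,j}\in H_0^r(\Omega_j)$ satisfying
\[
\norm{v_{0,j}}{0,\Omega_j}^2 + t^2 \snorm{v_{1,j}}{r,\Omega_j}^2 \le \big(K_j(t,u|_{\Omega_j}) + \eps\big)^2.
\]
Extend $v_{0,j}$ and $v_{1,j}$ by zero to the rest of $\Omega$ and set $u_0 := \sum_{j=1}^N v_{0,j}$, $u_1 := \sum_{j=1}^N v_{1,j}$. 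Then $u_0 \in L^2(\Omega)$, $u_1 \in H_0^r(\Omega)$, and $u_0 + u_1 = u$ almost everywhere, so $(u_0,u_1) \in \cX(u)$. Because the $\Omega_j$ are non-overlapping and each $v_{i,j}$ is supported in $\ol{\Omega_j}$, both the $L^2$-norm of $u_0$ and the top-order seminorm of $u_1$ split additively:
\[
\norm{u_0}{0,\Omega}^2 = \sum_{j=1}^N \norm{v_{0,j}}{0,\Omega_j}^2,
\qquad
\snorm{u_1}{r,\Omega}^2 = \sum_{j=1}^N \snorm{v_{1,j}}{r,\Omega_j}^2.
\]
Consequently $K(t,u)^2 \le J(t,u_0,u_1) = \sum_{j=1}^N J_j(t,v_{0,j},v_{1,j}) \le \sum_{j=1}^N \big(K_j(t,u|_{\Omega_j})+\eps\big)^2$, and letting $\eps \to 0$ yields the required $K$-functional bound.

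The principal conceptual point to check is that the zero extension of a function in $H_0^r(\Omega_j)$ belongs to $H_0^r(\Omega)$, not merely to $H^r(\Omega)$; this is handled by approximating $v_{1,j}$ in $H^r(\Omega_j)$ by test functions in $\cD(\Omega_j)$, each of which is trivially in $\cD(\Omega)$, and passing to the limit. With this in place the remainder is purely algebraic: apply the $K$-functional inequality pointwise, integrate against $t^{-2\theta-1}\dt$, and interchange the finite sum with the integral to obtain
\[
\norm{u}{\Ht{s}{\Omega}}^2 \le \sum_{j=1}^N \norm{u|_{\Omega_j}}{\Ht{s}{\Omega_j}}^2.
\]
The non-overlapping hypothesis is used only at the splitting step above: it guarantees that the supports of the $v_{i,j}$ and of their distributional derivatives meet only on sets of Lebesgue measure zero, so that no cross-terms appear in the additive decompositions of $\norm{u_0}{0,\Omega}^2$ and $\snorm{u_1}{r,\Omega}^2$.
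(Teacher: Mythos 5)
Your proof is correct. The paper does not give its own proof of this theorem (it cites Petersdorff and Ainsworth--McLean--Tran), but the closely related Theorem~\ref{the:A tvp 2} is proved there via the abstract framework of interpolation of operators: one forms the product space $\wtd\Pi^s$, shows the sum operator $\cS$ is an isometry at the endpoints $s=0$ and $s=r$, and then invokes the exactness of the real interpolation method and the identification $\wtd\Pi^s=[\wtd\Pi^0,\wtd\Pi^r]_\theta$. Your argument replaces this machinery with a direct pointwise-in-$t$ estimate on the $K$-functional: near-optimal decompositions of each $u|_{\Omega_j}$ are patched together by zero extension into a competitor decomposition of $u$ on $\Omega$ whose $J$-functional is exactly the sum of the local $J$-functionals, giving $K(t,u)^2\le\sum_j K_j(t,u|_{\Omega_j})^2$ and hence~\eqref{equ:A tvp2} after integrating against $t^{-2\theta-1}\,\mathrm{d}t$. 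The two routes rest on the same observations (disjointness of supports kills cross-terms; zero extension maps $H_0^r(\Omega_j)$ into $H_0^r(\Omega)$, which you correctly justify by density of $\cD(\Omega_j)$), but yours is more elementary and self-contained: it sidesteps the need for the operator-interpolation theorem and the nontrivial fact that the real interpolation of a finite $\ell^2$-product of couples is the $\ell^2$-product of the interpolations, both of which the paper's approach uses as black boxes. The trade-off is that the paper's operator-theoretic framing is more modular and reusable (it is recycled verbatim for Theorem~\ref{the:A tvp 2}), whereas your version would need to be re-run if the $A_0$, $A_1$ endpoint spaces were changed.
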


A consequence of the above theorem is that, under the assumption of Theorem~\ref{the:A tvp},
if~$\supp(u)\subset\ol\Omega_1$  then
\begin{equation}\label{equ:Ome Omep}
\|u\|_{\wtd H^s(\Omega)}^2 
\le 
\|u|_{\Omega_1}\|_{\wtd H^s(\Omega_1)}^2,
\end{equation}
provided that both norms are well defined. 
\section{The main results}\label{sec:mai res}

We now state our main results, the proofs of which will be carried out in
Section~\ref{sec:pro}. The first theorem confirms that if~$u\in\wtd
H^{1/2}(\cO)$ is such that~$\supp(u)\subset\ol\cO'$ where~$\cO'$ is a proper
subset of~$\cO$, which is itself a shape-regular Lipschitz domain,
then the two norms~$\norm{u}{\sim,1/2,\cO}$
and~$\norm{u}{\sim,1/2,\cO'}$ are not equivalent.

\begin{theorem}\label{the:A Sob loc nor}
Let~$\cO$ be a bounded Lipschitz domain in~$\R^n$, $n\ge1$.
Assume that~$u\in\Ht{1/2}{\cO}$ satisfies~$\supp(u)\subset\ol\cO'\subsetneq\cO$.
\begin{enumerate}
\renewcommand{\labelenumi}{\theenumi}
\renewcommand{\theenumi}{{\rm (\roman{enumi})}}
\item\label{ite:A nor Ome Omep}
The following relations between norms of~$u$ hold
\[
\norm{u}{\sim,1/2,\cO}
\le
C \norm{u}{\sim,1/2,\cO'}
\]
where~$C=(2\omega_{n}+1)^{1/2}$ with~$\omega_n$ being the surface area of the unit
ball in~$\R^n$.
\item\label{ite:A rev ine}
The opposite inequality is not true, i.e., there is no constant~$c$ independent of~$u$,
$\diam(\cO')$, and~$\diam(\cO)$ such that
\(
\norm{u}{\sim,1/2,\cO'}
\le c
\norm{u}{\sim,1/2,\cO}
\).
\end{enumerate}
\end{theorem}

Our next main result improves Theorem~\ref{the:A tvp}, namely we prove that the norm on the
right-hand side of~\eqref{equ:A tvp2} can be replaced
by~$\norm{u|_{\Omega_j}}{\Ht{s}{\Omega}}^2$; cf. \eqref{equ:Ome Omep}.

\begin{theorem} \label{the:A tvp 2}
Let $\{\Omega_1, \ldots,\Omega_N\}$ be a partition of a bounded Lipschitz
domain~$\Omega$ into non-overlapping Lipschitz domains. For~$0\le s\le r$, 
let~$u\in\wtd H^{s}(\Omega)$ be such that~$u_j\in\Hta{s}{\Omega_j}$,
where~$u_j$ is the zero extension of~$u|_{\Omega_j}$ onto~$\Omega\setminus\ol\Omega_j$,
$j=1,\ldots,N$.
Then the following inequality holds
\begin{equation}\label {equ:A tvp3}
\norm{u}{\Ht{s}{\Omega}}^2 
\le 
\sum_{j=1}^N \bnorm{u_{j}}{\Ht{s}{\Omega}}^2.
\end{equation}
\end{theorem}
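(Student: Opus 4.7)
My plan is to exploit the $K$-functional characterization
$$\|u\|^2_{\Ht{s}{\Omega}} = \int_0^\infty t^{-2\theta-1} K(t, u)^2 \dt, \qquad \theta = s/r,$$
and reduce \eqref{equ:A tvp3} to the pointwise inequality $K(t, u)^2 \le \sum_{j=1}^N K(t, u_j)^2$ for each $t > 0$; integrating against $t^{-2\theta-1}\dt$ will then deliver the theorem.

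For the pointwise bound, fix $t > 0$ and $\epsilon > 0$. For each $j$, pick a near-optimal pair $(u_{0,j}, u_{1,j}) \in \cX(u_j)$ with $\|u_{0,j}\|_{0,\Omega}^2 + t^2 |u_{1,j}|_{r,\Omega}^2 \le K(t, u_j)^2 + \epsilon/N$. Because the zero-extensions $u_j$ have pairwise disjoint supports and $\sum_j u_j = u$, the assembled pair $\bar u_0 := \sum_j u_{0,j} \in L^2(\Omega)$, $\bar u_1 := \sum_j u_{1,j} \in H_0^r(\Omega)$ satisfies $\bar u_0 + \bar u_1 = u$, i.e., $(\bar u_0, \bar u_1) \in \cX(u)$, so
$$K(t, u)^2 \le \|\bar u_0\|_{0,\Omega}^2 + t^2 |\bar u_1|_{r,\Omega}^2.$$

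The main obstacle is then to dominate the right-hand side by $\sum_j\bigl(\|u_{0,j}\|_{0,\Omega}^2 + t^2 |u_{1,j}|_{r,\Omega}^2\bigr)$. This is equivalent to the nonpositivity of the cross-terms $\sum_{i \ne j}\langle u_{0,i}, u_{0,j}\rangle_{0,\Omega}$ and $\sum_{i \ne j}\sum_{|\alpha|=r}\langle \pa^\alpha u_{1,i}, \pa^\alpha u_{1,j}\rangle_{0,\Omega}$. The cleanest resolution is to require each $(u_{0,j}, u_{1,j})$ to be supported in $\ol\Omega_j$: since every distributional derivative of an $H_0^r(\Omega)$ function supported in $\ol\Omega_j$ is again supported in $\ol\Omega_j$ as an $L^2$ function, this forces all cross-terms to vanish. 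The subtle point is that restricting to such a localized decomposition reproduces the constrained $K$-functional, which controls the $\Ht{s}{\Omega_j}$-norm and only yields Theorem~\ref{the:A tvp}, not the sharper statement sought here with the smaller $\Ht{s}{\Omega}$-norm. To get the genuine improvement, I would exploit the Hilbert-couple structure: the unconstrained minimizer is the linear object $v_j^* = (I + t^2 T)^{-1} u_j$, where $T$ is the positive self-adjoint operator on $L^2(\Omega)$ attached to the seminorm $|\cdot|_{r,\Omega}^2$, and by linearity $v^* := \sum_j v_j^*$ is already the optimizer for $u$. Expanding $K(t,u)^2 - \sum_j K(t,u_j)^2$ using this identity collapses the question to controlling the sign of a sum of pairings $\langle u_i, v_j^*\rangle_{0,\Omega}$ with $i \ne j$, and this is the genuine heart of the proof.
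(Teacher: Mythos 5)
Your proposal is incomplete at exactly the place you flag. You reduce the theorem to the pointwise bound $K(t,u)^2\le\sum_j K(t,u_j)^2$, identify the cross-terms in $\norm{\bar u_0}{0,\Omega}^2$ and $\snorm{\bar u_1}{r,\Omega}^2$ as the obstacle, and then leave their sign-control open as ``the genuine heart of the proof.'' Carrying your elliptic-solve route to completion in fact shows this cannot be patched: writing $K(t,v)^2=\langle v,(I-(I+t^2T)^{-1})v\rangle_{L^2(\Omega)}$ and using the pairwise $L^2$-orthogonality of the $u_j$ (disjoint supports) gives the exact identity
\[
K(t,u)^2-\sum_{j=1}^N K(t,u_j)^2
=-\sum_{i\ne j}\bigl\langle (I+t^2T)^{-1}u_i,\,u_j\bigr\rangle_{L^2(\Omega)} .
\]
For $r=1$ the resolvent $(I+t^2T)^{-1}$ has a nonnegative Green kernel, so the right-hand side is $\le0$ when all $u_j$ share a sign, but it is strictly positive as soon as, say, $u_1>0$ on $\Omega_1$ and $u_2<0$ on $\Omega_2$. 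So the pointwise $K$-functional inequality you intend to integrate is simply not available for general sign-mixed data, and your first fallback---forcing the near-optimal decompositions to be supported in $\ol\Omega_j$---only reproduces the constrained functional $K_1$ and hence the weaker estimate of Theorem~\ref{the:A tvp}, with $\norm{u_j}{\Ht{s}{\Omega_j}}$ rather than the smaller $\norm{u_j}{\Ht{s}{\Omega}}$ on the right.

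The paper's argument takes a different, interpolation-theoretic route that never invokes a $t$-by-$t$ comparison. It forms the product space $\wtd\Pi^s=\prod_{j=1}^N\Hta{s}{\Omega_j}$ with $\norm{\vecu}{\wtd\Pi^s}^2=\sum_j\norm{u_j}{\Ht{s}{\Omega}}^2$, observes that the summation map $\cS\vecu=\sum_j u_j$ is an isometry at both endpoints $s=0$ and $s=r$ (an exact Pythagorean identity, again from disjoint supports), identifies $\wtd\Pi^s$ with the intermediate space $[\wtd\Pi^0,\wtd\Pi^r]_\theta$, and concludes $\norm{\cS\vecu}{\Ht{s}{\Omega}}\le\norm{\vecu}{\wtd\Pi^s}$ by the exact interpolation property. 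The analytical weight is thus carried entirely by the endpoint equalities and by the identification of the interpolation norm on the product with the $\Hta{s}{\Omega_j}$ (ambient) norms rather than the constrained $\Ht{s}{\Omega_j}$ norms---which is precisely the distinction you noticed but did not resolve. If you want to stay closer to your own framework, this endpoint-isometry-plus-interpolation viewpoint is the idea you are missing; the naive near-optimal assembly cannot supply it.
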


A direct consequence of Theorem~\ref{the:A tvp 2} is the following corollary which
generalises Theorem~\ref{the:A tvp} and has applications discussed in Section~\ref{sec:app}.

\begin{corollary}\label{cor:tvp}
Under the assumption of Theorem~\ref{the:A tvp} we have
\begin{equation}\label{equ:A tvp 3}
\norm{u}{\Ht{1/2}{\Omega}}^2
\le
\sum_{j=1}^N \norm{u_{j}}{\Ht{1/2}{\Omega}}^2
\end{equation}
where~$u_j$ is the zero extension of~$u|_{\Omega_j}$ onto~$\Omega\setminus\ol\Omega_j$.
\end{corollary}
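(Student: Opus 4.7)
The plan is to deduce Corollary \ref{cor:tvp} directly from Theorem \ref{the:A tvp 2} by verifying that, at the endpoint $s=1/2$, the hypothesis $u_j\in\Hta{1/2}{\Omega_j}$ of Theorem \ref{the:A tvp 2} is automatic whenever $u\in\Ht{1/2}{\Omega}$. Once this verification is in hand, Theorem \ref{the:A tvp 2} with $s=1/2$ delivers \eqref{equ:A tvp 3} with no further work.

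To verify the hypothesis, I would proceed in three steps. Given $u\in\Ht{1/2}{\Omega}$, Theorem \ref{the:A tvp} with $s=1/2$ yields $u|_{\Omega_j}\in\Ht{1/2}{\Omega_j}$, which by Proposition \ref{pro:A 2 nor 3 nor}(ii) is equivalent to the finiteness of the Sobolev--Slobodetski norm $\norm{u|_{\Omega_j}}{\sim,1/2,\Omega_j}$. Next, since the zero extension $u_j$ is supported in $\ol{\Omega_j}\subsetneq\Omega$, the inequality in Theorem \ref{the:A Sob loc nor}\ref{ite:A nor Ome Omep} (which is really a direct pointwise estimate on the Slobodetski integrals, so the finiteness of the right-hand side is all that is needed) gives
\[
\norm{u_j}{\sim,1/2,\Omega}
\le
C\,\norm{u_j}{\sim,1/2,\Omega_j}
=
C\,\norm{u|_{\Omega_j}}{\sim,1/2,\Omega_j}<\infty.
\]
Applying Proposition \ref{pro:A 2 nor 3 nor}(ii) once more, now on $\Omega$ in the reverse direction, converts this to $\norm{u_j}{\Ht{1/2}{\Omega}}<\infty$, which is precisely the statement $u_j\in\Hta{1/2}{\Omega_j}$.

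The only delicate point --- and the reason the corollary is restricted to $s=1/2$ --- is the continuity of zero extension from $\Omega_j$ into $\Omega$ in the Slobodetski norm, a property furnished here by Theorem \ref{the:A Sob loc nor}\ref{ite:A nor Ome Omep}; at other fractional values of $s$ the analogous statement generally fails (cf.\ the strict inclusion \eqref{equ:A Hs Hss}), so the extra hypothesis of Theorem \ref{the:A tvp 2} cannot be removed in the same manner.
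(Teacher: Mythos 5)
Your argument does establish the corollary, but it is a considerably longer route than necessary, and the closing claim about the restriction to $s=1/2$ is incorrect. The paper's own proof is a one-liner: the hypothesis ``all norms are well defined'' in Theorem~\ref{the:A tvp} already gives $u|_{\Omega_j}\in\Ht{1/2}{\Omega_j}$, and Section~\ref{sec:Sob nor} has already observed that $K(t,u)\le K_1(t,u)$ for all $t>0$ (simply because $\cX_1(u)\subset\cX(u)$), which upon interpolation immediately yields the inclusion $\Ht{s}{\Omega_j}\subset\Hta{s}{\Omega_j}$. Once $u_j\in\Hta{1/2}{\Omega_j}$, Theorem~\ref{the:A tvp 2} applies. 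Your detour through the Sobolev--Slobodetski norms, invoking Proposition~\ref{pro:A 2 nor 3 nor}(ii) twice together with Theorem~\ref{the:A Sob loc nor}\ref{ite:A nor Ome Omep}, works (you are right that only finiteness is needed, so the diameter restriction $\tau<1$ in the proposition is harmless), but it brings in substantial machinery where an inequality between two infima suffices.

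The more significant issue is your last paragraph. The inclusion $\Ht{s}{\Omega_j}\subseteq\Hta{s}{\Omega_j}$ holds for \emph{every} $s\in(0,r)$ by the same trivial $K\le K_1$ argument; nothing special about $s=1/2$ is needed. Consequently, under the hypothesis of Theorem~\ref{the:A tvp} the hypothesis of Theorem~\ref{the:A tvp 2} is satisfied for all $0\le s\le r$, and the inequality~\eqref{equ:A tvp 3} actually holds for any such $s$. The restriction to $s=1/2$ in the corollary's statement is dictated by the intended application to the hypersingular operator in Section~\ref{sec:app}, not by a mathematical obstruction. Your citation of the strict inclusion~\eqref{equ:A Hs Hss} points in the wrong direction: that result asserts the inclusion is \emph{proper}, which is irrelevant here --- what the corollary needs is only that the inclusion \emph{holds}, and it does, for all $s$.

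Also a small wording point: you write that Theorem~\ref{the:A tvp} ``yields'' $u|_{\Omega_j}\in\Ht{1/2}{\Omega_j}$; it does not. Membership of $u|_{\Omega_j}$ in $\Ht{1/2}{\Omega_j}$ is part of the \emph{assumption} of Theorem~\ref{the:A tvp} (``assuming all the norms are well defined''), and the corollary explicitly adopts that assumption. The conclusion you want is available as a hypothesis, not as a deduction.
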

\begin{proof}
The result is a direct consequence of Theorem~\ref{the:A tvp 2}, noting
that~$\Ht{1/2}{\Omega_j}\subset\Hta{1/2}{\Omega_j}$, $j=1,\ldots,N$.
\end{proof}

\section{Proofs of the main results}\label{sec:pro}

\subsection{Proof of Theorem~\ref{the:A Sob loc nor}}
\begin{proof}
We first prove part~\ref{ite:A nor Ome Omep}.
Recall the definition of~$\norm{v}{\sim,1/2,\cO}$:
\[
\norm{u}{\sim,1/2,\cO}^2
=
\snorm{u}{1/2,\cO}^2
+
\int_{\cO} \frac{|u(x)|^2}{\dist(x,\partial\cO)} \dx.
\]
Clearly,
\begin{equation}\label{equ:A dis Ome Ome'}
\int_{\cO} \frac{|u(x)|^2}{\dist(x,\partial\cO)} \dx
=
\int_{\cO'} \frac{|u(x)|^2}{\dist(x,\partial\cO)} \dx
\le
\int_{\cO'} \frac{|u(x)|^2}{\dist(x,\partial\cO')} \dx.
\end{equation}
On the other hand, since~$\supp(u)\subset\ol\cO'$
\begin{equation}\label{equ:u12 Omg Omp}
\snorm{u}{1/2,\cO}^2
=
\iint_{\cO'\times\cO'}
\frac{|u(x)-u(y)|^2}{|x-y|^{n+1}} \dx \dy
+
2 \int_{\cO'}\Big(
\int_{\cO\setminus\cO'} \frac{\dy}{|x-y|^{n+1}}
\Big) |u(x)|^2 \dx.
\end{equation}
It will be proved in Lemma~\ref{lem:A Ome Omep} below that
\begin{equation}\label{equ:A Ome Omep dis}
\int_{\cO\setminus\cO'} \frac{\dy}{|x-y|^{n+1}}
\le
\frac{\omega_{n}}{\dist(x,\pa\cO')}, \quad x\in\cO',
\end{equation}
where~$\omega_n$ is the surface area of the unit ball in~$\R^n$. Hence
\[
\snorm{u}{1/2,\cO}^2
\le
\snorm{u}{1/2,\cO'}^2
+
2\omega_{n} \int_{\cO'} \frac{|u(x)|^2}{\dist(x,\pa\cO')} \dx
\le
2\omega_{n} \norm{u}{\sim,1/2,\cO'}^2
\]
so that, with the help of~\eqref{equ:A dis Ome Ome'},
\[
\norm{u}{\sim,1/2,\cO}^2
\le
2\omega_{n} \norm{u}{\sim,1/2,\cO'}^2
+
\int_{\cO'} \frac{|u(x)|^2}{\dist(x,\partial\cO')} \dx
\le
(2\omega_{n}+1) \norm{u}{\sim,1/2,\cO'}^2.
\]
This proves part~\ref{ite:A nor Ome Omep}.

Part~\ref{ite:A rev ine} is proved by the following counter-example, which is a modification of the
counter-example in the appendix of~\cite{AinGuo00}. Consider~$D$ to be the upper half of
the unit disk in the $(x,y)$-plane, i.e.,
\[
D = \set{(r,\theta)}{r\in[0,1], \ \theta\in[0,\pi]}
\]
where~$(r,\theta)$ denote polar coordinates. Then define
\begin{align*}
\cO &= \set{(r,\theta)}{r\in[0,1], \ \theta=0 \ \text{or} \ \theta=\pi}
       = [-1,1]\times\{0\} 
\\
\cO_{\varepsilon} &= \set{(r,\theta)}{r\in[\varepsilon,3/4], \ \theta=0}
        = [\varepsilon,3/4]\times\{0\}.
\end{align*}

For~$\varepsilon\in(0,1/2)$, define~$U_\varepsilon : D\to\R$ and~$U : D\to\R$ by
\[
U_\varepsilon(r,\theta) =
\begin{cases}
0, \quad & 0 \le r < \varepsilon,
\\
(-\log r)^{-1/2} - (-\log \varepsilon)^{-1/2}, \quad & \varepsilon \le r < 1/2,
\\
(3-4r)
\left(
(\log 2)^{-1/2} - (-\log \varepsilon)^{-1/2}
\right), \quad & 1/2 \le r < 3/4,
\\
0, \quad & 3/4 \le r \le 1,
\end{cases}
\]
and
\[
U(r,\theta) =
\begin{cases}
0, \quad & r=0,
\\
(-\log r)^{-1/2} , \quad & 0 < r < 1/2,
\\
(3-4r) (\log 2)^{-1/2}, \quad & 1/2 \le r < 3/4,
\\
0, \quad & 3/4 \le r \le 1,
\end{cases}
\]
These two functions are first studied in~\cite{AinGuo00}. We now define~$V_\varepsilon :
D\to\R$ by
\[
V_\varepsilon(r,\theta) =
\begin{cases}
U_{\varepsilon}(r,\theta)\cos\theta, \quad & 0\le\theta<\pi/2,
\\
0, \quad & \pi/2 \le\theta\le\pi.
\end{cases}
\]
Let~$u_\varepsilon$ be the trace of~$V_\varepsilon$ on the boundary of~$D$. Then
\[
\supp(u_\varepsilon) = [\varepsilon,3/4] \times \{0\} = \cO_{\varepsilon}.
\]
For~$(r,\theta)\in D$,
\begin{equation}\label{equ:A Ve Ue U}
\begin{alignedat}{2}
\snorm{V_\varepsilon(r,\theta)}{}
&\le
\snorm{U_\varepsilon(r,\theta)}{}
&&\le
\snorm{U(r,\theta)}{}
\\
\Bsnorm{\frac{\partial V_\varepsilon}{\partial r}(r,\theta)}{}
&\le
\Bsnorm{\frac{\partial U_\varepsilon}{\partial r}(r,\theta)}{}
&&\le
\Bsnorm{\frac{\partial U}{\partial r}(r,\theta)}{}
\\
\Bsnorm{\frac{\partial V_\varepsilon}{\partial \theta}(r,\theta)}{}
&\le
\snorm{U_\varepsilon(r,\theta)}{}
&&\le
\snorm{U(r,\theta)}{}
\end{alignedat}
\end{equation}
We note that
\[
\frac{\partial U}{\partial r}(r,\theta) =
\begin{cases}
\frac12 r^{-1} (-\log r)^{-3/2}, \quad & 0 < r < 1/2,
\\
-4 (\log2)^{-1/2}, \quad & 1/2 < r < 3/4,
\\
0, \quad & 3/4 < r < 1,
\end{cases}
\]
so that~$U\in H^1(D)$. Indeed,
\begin{align*}
\iint_D |U(r,\theta)|^2 \dx \dy
&=
\pi \int_0^{1/2} \frac{r}{-\log r} \dr
+
\pi \int_{1/2}^{3/4} \frac{1}{\log2}(3-4r)^2 r\dr < \infty
\end{align*}
and
\begin{align*}
\iint_D \Bsnorm{\frac{\partial U}{\partial r}(r,\theta)}{}^2 \dx\dy
&=
\frac{\pi}{4} \int_0^{1/2} \frac{\dr}{r(-\log r)^{3}}
+
\pi \int_{1/2}^{3/4} \frac{16}{\log2} r\dr < \infty.
\end{align*}
It follows from~\eqref{equ:A Ve Ue U} that~$V_\varepsilon, U_\varepsilon\in H^1(D)$ and
\[
\norm{V_\varepsilon}{1,D} \lesssim
\norm{U_\varepsilon}{1,D} \le
\norm{U}{1,D}, \quad 0<\varepsilon<1/2.
\]
Consequently, by the definition of the Slobodetski norm and the trace theorem
\[
\norm{u_\varepsilon}{1/2,\cO} \le
\norm{u_\varepsilon}{1/2,\partial D} \lesssim
\norm{V_\varepsilon}{1,D} \lesssim 1.
\]
Since~$\supp(u_\varepsilon)=[\varepsilon,3/4]\times\{0\}$, we have
\begin{align*}
\norm{u_\varepsilon}{\sim,1/2,\cO}^2
&=
\snorm{u_\varepsilon}{1/2,\cO}^2
+
\int_{-1}^1 \frac{|u_\varepsilon(r,0)|^2}{\min\{1-r,1+r\}} \dr
\\
&=
\snorm{u_\varepsilon}{1/2,\cO}^2
+
\int_{\varepsilon}^{3/4} \frac{|u_\varepsilon(r,0)|^2}{\min\{1-r,1+r\}} \dr.
\end{align*}
Due to
\[
\norm{u_\varepsilon}{L^{2}(\cO)}^2
=
\norm{u_\varepsilon}{L^{2}(\cO_\varepsilon)}^2
\simeq
\int_{\varepsilon}^{3/4} \frac{|u_\varepsilon(r,0)|^2}{\min\{1-r,1+r\}} \dr,
\]
we deduce
\[
\norm{u_\varepsilon}{\sim,1/2,\cO}^2
\simeq
\snorm{u_\varepsilon}{1/2,\cO}^2
+
\norm{u_\varepsilon}{L^{2}(\cO)}^2
=
\norm{u_\varepsilon}{1/2,\cO}^2,
\]
so that~$\norm{u_\varepsilon}{\sim,1/2,\cO}^2 \lesssim 1$.
On the other hand, a simple calculation reveals that
\begin{align*}
\norm{u_\varepsilon}{\sim,1/2,\cO_\varepsilon}^2
&\ge
\int_{\cO_\varepsilon} \frac{|u_\varepsilon(r,0)|^2}{\dist(r,\partial\cO_\varepsilon)} \dr
>
\int_{\varepsilon}^{1/2} \frac{|u_\varepsilon(r,0)|^2}{r} \dr
\\
&=
\log|\log\varepsilon| + \frac{4(\log2)^{1/2}}{(\log(1/\varepsilon))^{1/2}}
- \frac{\log2}{\log(1/\varepsilon)} - \log|\log 2| - 3.
\end{align*}
Hence, $\norm{u_\varepsilon}{\sim,1/2,\cO_\varepsilon}\to\infty$ as~$\varepsilon\to0^+$, 
while~$\norm{u_\varepsilon}{\sim,1/2,\cO}$ is bounded. This proves part~\ref{ite:A rev ine},
completing the proof of the theorem.
\end{proof}

We now prove the claim~\eqref{equ:A Ome Omep dis}.
\begin{lemma}\label{lem:A Ome Omep}
Let $\cO$ and~$\cO'$ be two open bounded domains in~$\R^n$, $n\ge1$,
satisfying~$\cO'\subsetneq\cO$, and let~$x\in\cO'$. 
\begin{enumerate}
\renewcommand{\labelenumi}{\theenumi}
\renewcommand{\theenumi}{{\rm (\roman{enumi})}}
\item\label{ite:A xmy dis}
The following inequality holds
\[
\int_{\cO\setminus\cO'} \frac{\dy}{|x-y|^{n+1}}
\le
\frac{\omega_n}{\dist(x,\pa\cO')}
\]
where~$\omega_n$ is the surface area of the unit ball in~$\R^n$.
\item\label{ite:A xmy dis opp}
The opposite inequality is not true, i.e., there is no constant~$C$ independent of~$x$
and~$\cO'$ such that
\[
\frac{1}{\dist(x,\pa\cO')}
\le C
\int_{\cO\setminus\cO'} \frac{\dy}{|x-y|^{n+1}}.
\]
\end{enumerate}
\end{lemma}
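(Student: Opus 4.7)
For part~\ref{ite:A xmy dis}, my strategy is to show that the open ball $B(x, d)$ with $d := \dist(x, \pa\cO')$ is contained in $\cO'$, so that $\cO \setminus \cO'$ lies outside this ball. Concretely, if some $z \in B(x, d)$ failed to lie in $\cO'$, then the straight segment joining $x \in \cO'$ to $z \notin \cO'$ would cross $\pa\cO'$ at a point at distance strictly less than $d$ from $x$, contradicting the definition of $d$. Once this containment is established, the integrand $|x-y|^{-n-1}$ is a radial decreasing function, so monotonicity gives
\[
\int_{\cO\setminus\cO'} \frac{\dy}{|x-y|^{n+1}}
\le
\int_{\R^n \setminus B(x,d)} \frac{\dy}{|x-y|^{n+1}}
= \omega_n \int_d^\infty r^{-2}\dr
= \frac{\omega_n}{d},
\]
after passing to polar coordinates. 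This yields~\ref{ite:A xmy dis}.

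For part~\ref{ite:A xmy dis opp}, I would construct a counter-example in which $\dist(x, \pa\cO')$ is driven to zero while the integral remains bounded. The idea is to place $x$ close to the portion of $\pa\cO'$ that coincides with $\pa\cO$, so that the removed set $\cO \setminus \cO'$ is forced to sit at a fixed positive distance from $x$. A minimal one-dimensional example is $\cO = (-2, 2)$, $\cO' = (0, 2)$, and $x = 2 - \varepsilon$ with $\varepsilon \to 0^+$: here $\dist(x, \pa\cO') = \varepsilon$, whereas every $y \in \cO \setminus \cO' = (-2, 0]$ satisfies $|x-y| \ge 2-\varepsilon$, so the integral is explicitly bounded by a constant independent of $\varepsilon$. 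If an $n$-dimensional example is preferred, $\cO = B(0,2)$, $\cO'$ the right half-ball $\{y\in\cO : y_1>0\}$, and $x = (2-\varepsilon,0,\ldots,0)$ works identically, since the integrand is uniformly bounded on $\cO\setminus\cO'$.

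\textbf{Anticipated obstacle.} Neither part relies on deep machinery. The only subtlety in~\ref{ite:A xmy dis} is the elementary topological observation that the distance-ball sits inside $\cO'$; this is what converts the lemma into a one-line polar-coordinate computation. The conceptual point for~\ref{ite:A xmy dis opp} is recognising that the failure of the reverse inequality is driven by the geometry of $\pa\cO' \cap \pa\cO$ rather than by any refined feature of $u$, which is exactly what makes an explicit example easy to write down.
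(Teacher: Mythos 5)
For part~\ref{ite:A xmy dis} your argument is essentially the paper's: you observe $B(x,d)\subset\cO'$ with $d=\dist(x,\pa\cO')$, deduce $\cO\setminus\cO'\subset\R^n\setminus B(x,d)$, and pass to polar coordinates to get $\omega_n/d$; the paper states the containment directly while you add the (correct) segment-crossing justification. For part~\ref{ite:A xmy dis opp}, however, you take a genuinely different and more self-contained route. The paper argues by contradiction: assuming the reverse inequality held, formula~\eqref{equ:u12 Omg Omp} and the definition of $\norm{\cdot}{\sim,1/2,\cO'}$ would force $\norm{u}{\sim,1/2,\cO'}\lesssim\norm{u}{\sim,1/2,\cO}$, contradicting Theorem~\ref{the:A Sob loc nor}~\ref{ite:A rev ine}. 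Your approach instead exhibits a direct geometric counter-example, pushing $x$ toward the part of $\pa\cO'$ shared with $\pa\cO$ so that $\dist(x,\pa\cO')\to0$ while $\cO\setminus\cO'$ stays a bounded distance away; both the one-dimensional interval example and the half-ball example are correct (indeed, for $x=2-\varepsilon$ one has $\dist(x,\pa\cO')=\varepsilon\to0$ while $|x-y|\ge 2-\varepsilon$ on $\cO\setminus\cO'$ keeps the integral uniformly bounded). The trade-off: your construction is elementary, avoids dependence on the analytic machinery of Theorem~\ref{the:A Sob loc nor}, and makes transparent that the failure is caused by $\pa\cO'\cap\pa\cO$; the paper's contradiction argument is shorter given that Theorem~\ref{the:A Sob loc nor}~\ref{ite:A rev ine} is already available and it explains why the reverse bound cannot hold in the context that motivated the lemma. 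Both are valid proofs.
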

\begin{proof}
To prove part~\ref{ite:A xmy dis}, we first observe that
\[
\big(\cO\setminus\cO'\big) \subset \big(\R^n \setminus B_{\rho(x)}(x)\big),
\quad x\in\cO',
\]
where~$\rho(x)=\dist(x,\partial\cO')$ and~$B_{\rho(x)}(x)$ is the ball centred
at~$x$ having radius~$\rho(x)$. By using spherical coordinates centred at~$x$ we
obtain
\begin{align*}
\int_{\cO\setminus\cO'} \frac{\dy}{|x-y|^{n+1}}
&\le
\int_{\R^n \setminus B_{\rho(x)}(x)} \frac{\dy}{|x-y|^{n+1}}
=
\omega_n \int_{\rho(x)}^\infty \frac{r^{n-1}\dr}{r^{n+1}}
=
\frac{\omega_n}{\dist(x,\pa\cO')}.
\end{align*}

To prove part~\ref{ite:A xmy dis opp}, we revisit the proof of part~\ref{ite:A nor Ome Omep} of
Theorem~\ref{the:A Sob loc nor}. If the opposite of~\eqref{equ:A Ome Omep dis} holds, then
\[
\int_{\cO\setminus\cO'} \frac{\dy}{|x-y|^{n+1}}
\simeq
\frac{1}{\dist(x,\pa\cO')}
\]
with constants independent of~$x$ and~$\cO'$. Consider a function~$u$ satisfying the
assumptions in Theorem~\ref{the:A Sob loc nor}. It follows from the definition 
of~$\norm{u}{\sim,1/2,\cO'}$ that
\begin{align*}
\norm{u}{\sim,1/2,\cO'}^2
&\simeq
\snorm{u}{1/2,\cO'}^2 +
\int_{\cO'} \Big( \int_{\cO\setminus\cO'} \frac{\dy}{|x-y|^{n+1}} \Big) |u(x)|^2 \dx.
\end{align*}
This together with equation~\eqref{equ:u12 Omg Omp} yields
\[
\norm{u}{\sim,1/2,\cO'}^2
\simeq
\snorm{u}{1/2,\cO}^2
\le
\norm{u}{\sim,1/2,\cO}^2,
\]
which contradicts part~\ref{ite:A rev ine} of Theorem~\ref{the:A Sob loc nor}.
\end{proof}

\subsection{Proof of claim~\eqref{equ:A Hs Hss}}\label{subsec:pro inc}
To prove the proper inclusion~$\Ht{s}{\cO_1}\subsetneq\Hta{s}{\cO_1}$, it
suffices to prove the following lemma with~$r=1$.
First we recall the definition of the H\"older space~$C^{0,\alpha}(\ol\cO)$ for~$\alpha\in(0,1)$
\[
C^{0,\alpha}(\ol\cO) :=
\bigset{u\in C(\ol\cO)}{\sup_{x,y\in\cO \atop
x\not=y}\frac{|u(x)-u(y)|}{|x-y|^\alpha}<\infty}.
\]

\begin{lemma}\label{lem:A JJ1}
Assume that~$\cO$ has a smooth boundary. Let~$u\in A_0 \cap C^{0,\alpha}(\ol\cO)$
for some~$\alpha\in(0,1)$. Assume that~$u\ge0$, $u\notequiv0$ in~$\cO_1$. Then, for any~$t>0$,
\[
K(t,u) < K_1(t,u)
\]
where~$K$ and~$K_1$ are defined by~\eqref{equ:Ktu} and~\eqref{equ:K1tu}, respectively.
\end{lemma}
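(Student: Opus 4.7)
My plan is to show $K(t,u) < K_1(t,u)$ by exhibiting a single element of $\cX(u)$ whose $J$-value is strictly less than $K_1(t,u)^2$. The starting observation is that every $(u_0,u_1) \in \cX_1(u)$ has $u_0, u_1$ supported in $\ol\cO_1$ and therefore satisfies $J(t,u_0,u_1) = J_1(t,u_0,u_1)$, so $K(t,u) \le K_1(t,u)$ trivially. To upgrade to strict inequality, I would first produce the unique minimizer $(v_0^*, v_1^*)$ of $J_1$ over $\cX_1(u)$ by strict convexity, continuity, and coercivity of $u_1 \mapsto J_1(t, u - u_1, u_1)$ on $A_1 \simeq H_0^r(\cO_1)$, and then demonstrate that it fails the Euler--Lagrange condition for the $\cX(u)$-problem, which will guarantee a nearby perturbation in $\cX(u) \setminus \cX_1(u)$ with strictly smaller $J$.

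In the case $r = 1$ (the relevant case by the remark preceding the lemma), the Euler--Lagrange equation for $v_1^*$ is the weak form of
\[
-t^2 \Delta v_1^* + v_1^* = u \text{ in } \cO_1, \qquad v_1^* = 0 \text{ on } \pa\cO_1,
\]
and the H\"older regularity of $u$ lifts via standard elliptic Schauder theory to $v_1^* \in C^{2,\alpha}(\ol\cO_1)$, invoking smoothness of $\pa\cO_1$ which I read into the hypothesis that $\cO$ has smooth boundary. The crucial step uses the sign assumptions $u \ge 0$ and $u \not\equiv 0$ in $\cO_1$: the strong maximum principle for $-t^2 \Delta + 1$ forces $v_1^* > 0$ in $\cO_1$, and Hopf's boundary point lemma then yields $\pa_n v_1^* < 0$ on $\pa\cO_1$, where $n$ is the outward unit normal to $\cO_1$. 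Since $\cO_1 \subsetneq \cO$, the portion $\pa\cO_1 \cap \cO$ carries positive surface measure.

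To conclude, I would test the $\cX(u)$-Euler--Lagrange condition $\int_\cO v_0^* \phi \dx = t^2 \int_\cO \nabla v_1^* \cdot \nabla \phi \dx$ against $\phi \in H_0^1(\cO)$. Applying Green's formula in $\cO_1$ and using that $v_1^*$ vanishes outside $\ol\cO_1$ gives
\[
t^2 \int_\cO \nabla v_1^* \cdot \nabla \phi \dx = \int_\cO v_0^* \phi \dx + t^2 \int_{\pa\cO_1 \cap \cO} (\pa_n v_1^*) \phi \dsi,
\]
so the $\cX(u)$-Euler--Lagrange condition reduces to the vanishing of the boundary integral. Choosing $\phi$ as a localized cut-off whose trace on $\pa\cO_1 \cap \cO$ is aligned with $-\pa_n v_1^*$ near a smooth boundary point produces a strictly nonzero boundary integral; a first-order expansion of $J$ at $(v_0^*, v_1^*)$ in this direction then yields a strict decrease for all sufficiently small $\varepsilon$ of appropriate sign, giving $K(t,u)^2 \le J(t, v_0^* - \varepsilon\phi, v_1^* + \varepsilon\phi) < J(t, v_0^*, v_1^*) = K_1(t,u)^2$. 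The main obstacle is securing enough regularity of $\pa \cO_1$ to invoke Hopf's lemma; for Lipschitz $\cO_1$ this is handled at almost every boundary point via a local interior ball condition, and a $C^{1,1}$ assumption on $\pa\cO_1 \cap \cO$ would remove the subtlety entirely.
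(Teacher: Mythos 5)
Your proposal is sound in its overall logic and genuinely different from the paper's proof. The paper minimizes $J$ over the \emph{unconstrained} set $\cX(u)$, derives (via the Lagrangian/Euler--Lagrange machinery) that the minimizer $u_1^\ast$ solves $-t^2\Delta u_1^\ast + u_1^\ast = u$ in $\cO$ with $u_1^\ast=0$ on $\partial\cO$, and then applies the interior strong maximum principle to get $u_1^\ast>0$ throughout $\cO$, so $\supp u_1^\ast=\ol\cO\not\subseteq\ol\cO_1$ and $(u_0^\ast,u_1^\ast)\notin\cX_1(u)$. You instead minimize over the \emph{constrained} set $\cX_1(u)$, derive a Dirichlet problem on $\cO_1$, and show via Green's formula plus Hopf's boundary point lemma that the resulting boundary flux term $t^2\int_{\pa\cO_1\cap\cO}(\pa_n v_1^\ast)\phi\,\dsi$ is nonzero, so the constrained minimizer is not critical for the larger problem and a descent direction exists. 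Both are correct variational strategies, and the descent step you sketch (first variation nonzero $\Rightarrow$ strict decrease) is valid.

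The real gap is the one you yourself flag, and it is more serious than you suggest. The paper's approach needs boundary regularity only of $\pa\cO$ (assumed smooth in the lemma) and otherwise uses only the \emph{interior} strong maximum principle; it never touches $\pa\cO_1$. Your approach requires Schauder regularity of $v_1^\ast$ up to $\pa\cO_1$ and Hopf's lemma \emph{on} $\pa\cO_1$, both of which need $\pa\cO_1$ to be better than Lipschitz ($C^{1,\alpha}$ or an interior ball condition). The hypotheses of the lemma give $\cO_1$ as only a Lipschitz domain, and your fallback---that ``almost every boundary point'' of a Lipschitz domain satisfies a local interior ball condition---is not in general true (a Lipschitz boundary need not admit an interior tangent ball at any point if the Lipschitz constant is not small). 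So as written the argument does not close under the stated hypotheses without a further regularity assumption on $\pa\cO_1$. The paper's choice of which problem to minimize over is precisely what lets it bypass $\pa\cO_1$ entirely, which is the decisive advantage of that route.
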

\begin{proof}
Fix~$t>0$. There exists~$(u_0^\ast,u_1^\ast)\in\cX(u)$, see~\eqref{equ:Xu}, satisfying
\begin{equation}\label{equ:A opt pro}
(u_0^\ast,u_1^\ast) = \argmin\set{J(t,u_0,u_1)}{(u_0,u_1)\in\cX(u)}
\end{equation}
where~$J(t,u_0,u_1)$ is defined in~\eqref{equ:A JJ1}. Indeed, let
\[
m := \inf\set{J(t,u_0,u_1)}{(u_0,u_1)\in\cX(u)}
= \inf\set{J(t,u-u_1,u_1)}{u_1\in A_1}
\]
and let~$\{u_1^{(k)}\}$ be a sequence in~$A_1$ satisfying
\[
\lim_{k\to\infty} J(t,u-u_1^{(k)},u_1^{(k)}) = m.
\]
It follows from the definition of the functional~$J$ that~$\{u_1^{(k)}\}$ is bounded
in~$H_0^1(\cO)$. Hence there exists a subsequence~$\{u_1^{(k_j)}\}$ converging
weakly in~$H_0^1(\cO)$ to~$u_1^\ast$. It can be seen that~$u_1^\ast\in A_1$. From the
definition of~$m$, it follows that
\[
m\le J(t,u-u_1^\ast,u_1^\ast). 
\]
On the other hand, recall the property that if a sequence~$\{v_j\}$ in a Banach
space~$E$ converges weakly to~$v\in E$, 
then~$\norm{v}{E} \le \liminf_{j\to\infty}\norm{v_j}{E}$. Hence,
the weak convergence of~$\{u_1^{(k_j)}\}$ to~$u_1^\ast$ implies
\[
J(t,u-u_1^\ast,u_1^\ast)
\le
\liminf_{j\to\infty}
J(t,u-u_1^{(k_j)},u_1^{(k_j)}) = m.
\]
Thus~$J(t,u-u_1^\ast,u_1^\ast)=m$ and therefore~$(u_0^\ast,u_1^\ast):=(u-u_1^\ast,u_1^\ast)$ 
is a minimiser.

To prove the lemma, it suffices to show that~$(u_0^\ast,u_1^\ast)\notin\cX_1(u)$;
see~\eqref{equ:Xu1}. The problem~\eqref{equ:A opt pro} is an optimisation problem with
constraint, the constraint being~$g(u_0,u_1)=0$ where
\[
g : L^2(\cO) \times H_0^1(\cO) \to L^2(\cO), \quad 
g(u_0,u_1) := u_0 + u_1 - u.
\]
For each~$t>0$, the Lagrangian functional $\cL:L^2(\cO)\times H_0^1(\cO)\times L^2(\cO)
\to\R$ is defined by
\[
\cL(u_0,u_1;p) := J(t,u_0,u_1) + G(u_0,u_1,p)
\]
where
\(
G(u_0,u_1,p) := \inpro{p}{g(u_0,u_1)}_{L^2(\cO)}
= \inpro{p}{u_0+u_1-u}_{L^2(\cO)}.
\)
Here~$p$ is the Lagrangian multiplier. 
It is well known that
\begin{equation}\label{equ:min vvp}
\min_{(u_0,u_1)\in\cX(u)} J(t,u_0,u_1)
=
\inf_{u_0\in L^2(\cO), \; u_1\in H_0^1(\cO)} \
\sup_{p\in L^2(\cO)} \cL(u_0,u_1,p).
\end{equation}

For any functional~$F:(u,p)\mapsto F(u,p)$, we denote by 
\(
\pa_u F(u,p)(\varphi)
\)
the $u$-Fr\'echet derivative of~$F$ at~$(u,p)$, acting on~$\varphi$. Similarly,~$\pa_p
F(u,p)(q)$ denotes the $p$-Fr\'echet derivative of~$F$ at~$(u,p)$, acting on~$q$. The
minimiser~$(u_0^\ast,u_1^\ast)$ to problem~\eqref{equ:A opt pro}
and the solution~$(u_0^\ast,u_1^\ast,p^\ast)$ to the minimax problem~\eqref{equ:min vvp} solve
the following equations
\begin{align*}
\partial_{u_0} \cL(u_0,u_1,p) = 0, 
\quad
\partial_{u_1} \cL(u_0,u_1,p) = 0,
\quad
\partial_p \cL(u_0,u_1,p) = 0.
\end{align*}
Since (see e.g. \cite{Car67})
\begin{alignat*}{2}
\partial_{u_0}J(t,u_0,u_1)(\varphi)
&=
2\inpro{u_0}{\varphi}_{L^2(\cO)} &&\quad\forall\varphi\in L^2(\cO),
\\
\partial_{u_1}J(t,u_0,u_1)(\psi)
&=
2t^2\inpro{\nabla u_1}{\nabla \psi}_{L^2(\cO)} &&\quad\forall\psi\in H_0^1(\cO),
\\
\partial_{u_0} G(u_0,u_1,p)(\varphi)
&=
\inpro{p}{\varphi}_{L^2(\cO)} &&\quad\forall \varphi \in L^2(\cO),
\\
\partial_{u_1} G(u_0,u_1,p)(\varphi)
&=
\inpro{p}{\psi}_{L^2(\cO)} &&\quad\forall \psi \in H_0^1(\cO),
\\
\partial_p G(u_0,u_1,p)(q)
&=
\inpro{q}{u_0+u_1-u}_{L^2(\cO)} &&\quad\forall q\in L^2(\cO),
\end{alignat*}
we have
\begin{alignat*}{2}
\partial_{u_0} \cL(u_0,u_1,p)(\varphi)
&=
2\inpro{u_0}{\varphi}_{L^2(\cO)}
+
\inpro{p}{\varphi}_{L^2(\cO)}
&&\quad\forall\varphi\in L^2(\cO),
\\
\partial_{u_1} \cL(u_0,u_1,p)(\psi)
&=
2t^2\inpro{\nabla u_1}{\nabla\psi}_{L^2(\cO)}
+
\inpro{p}{\psi}_{L^2(\cO)}
&&\quad\forall\psi\in H_0^1(\cO),
\\
\partial_{p} \cL(u_0,u_1,p)(q)
&=
\inpro{q}{u_0+u_1-u}_{L^2(\cO)}
&&\quad\forall q\in L^2(\cO).
\end{alignat*}
Hence, $(u_0^\ast,u_1^\ast,p^\ast)\in L^2(\cO)\times H_0^1(\cO)\times L^2(\cO)$ satisfies
\begin{align}
2\inpro{u_0^\ast}{\varphi}_{L^2(\cO)} + \inpro{p^\ast}{\varphi}_{L^2(\cO)} &= 0
\quad\forall\varphi\in L^2(\cO),
\label{equ:A Lv0}
\\
2t^2\inpro{\nabla u_1^\ast}{\nabla\psi}_{L^2(\cO)} + \inpro{p^\ast}{\psi}_{L^2(\cO)} &= 0
\quad\forall\psi\in H_0^1(\cO),
\label{equ:A Lv1}
\\
\inpro{q}{u_0^\ast+u_1^\ast-u}_{L^2(\cO)} &= 0
\quad \forall q\in L^2(\cO).
\label{equ:A Lpq}
\end{align}
It follows from~\eqref{equ:A Lv0} and~\eqref{equ:A Lv1} that
\[
t^2\inpro{\nabla u_1^\ast}{\nabla\psi}_{L^2(\cO)} -
\inpro{u_0^\ast}{\psi}_{L^2(\cO)} = 0
\quad\forall\psi\in H_0^1(\cO).
\]
This equation and~\eqref{equ:A Lpq} give
\begin{equation*}\label{equ:A Jv wea for}
t^2\inpro{\nabla u_1^\ast}{\nabla\psi}_{L^2(\cO)} +
\inpro{u_1^\ast}{\psi}_{L^2(\cO)} = 
\inpro{u}{\psi}_{L^2(\cO)} \quad\forall\psi\in H_0^1(\cO).
\end{equation*}
This is a weak formulation of the following boundary value problem
\begin{equation}\label{equ:A Poi bvp}
\begin{aligned}
-t^2 \Delta u_1^\ast + u_1^\ast &= u \quad\text{in}\quad\cO,
\\
u_1^\ast &= 0 \quad\text{on}\quad\partial\cO.
\end{aligned}
\end{equation}
Since~$\cO$ has smooth boundary and~$u\in C^{0,\alpha}(\ol\cO)$, we deduce
that~$u_1^\ast\in C(\ol\cO)\cap C^2(\cO)$. Moreover, since~$u\ge0$ in~$\cO$,
due to the strong maximum principle, see e.g. \cite[Corollary~9.37]{Bre11},
either~$u_1^\ast>0$ in~$\cO$ or~$u_1^\ast\equiv0$ in~$\cO$. If~$u_1^\ast\equiv0$
then~$u_0^\ast=u$. Moreover, \eqref{equ:A Lv1} implies~$p\equiv0$ so
that~$u_0^\ast\equiv0$ on~$\cO$ due to~\eqref{equ:A Lv0}. This contradicts the
assumption that~$u\notequiv0$. Hence~$u_1^\ast>0$ on~$\cO$, which
implies~$(u_0^\ast,u_1^\ast)\notin\cX_1(u)$.
\end{proof}

\begin{remark}
The above result is consistent with the well-known fact that the values of the
solution~$u_1^\ast$ of~\eqref{equ:A Poi bvp} in a subdomain~$\cO_2\subsetneq\cO$ depends
on the values of~$u$ not only in~$\cO_2$ but in all of~$\cO$; see
e.g. \cite[page~307]{Bre11}.
\end{remark}

\subsection{Proof of Theorem~\ref{the:A tvp 2}}

The proof follows along the lines of the proof of \cite[Theorem~4.1]{AinMcLTra}.
\begin{proof}
Introduce the product space
\[
\wtd\Pi^s := \prod_{j=1}^N \wtd H_{\ast}^{s}(\Omega_j),
\quad 0 \le s \le r,
\]
with a norm defined from the interpolation norms by
\[
\norm{\vecu}{\wtd\Pi^s}^2
:=
\sum_{j=1}^N
\norm{u_j}{\Hta{s}{\Omega_j}}^2
=
\sum_{j=1}^N
\norm{u_j}{\Ht{s}{\Omega}}^2,
\]
where~$\vecu=(u_1,\ldots,u_N)$. If~$s=\theta r$ for some $\theta\in(0,1)$, then
\[
\wtd\Pi^s = [\wtd\Pi^0,\wtd\Pi^r]_{\theta}.
\]
On the product set~$\wtd\Pi^s$, consider the sum operator~$\cS:\wtd\Pi^s\to\wtd
H^s(\Omega)$ defined by
\[
\cS\vecu
:=
\sum_{j=1}^N u_j, \quad u_j\in\Hta{s}{\Omega_j}.
\]
Recalling
that~$\norm{\cdot}{\Hta{0}{\Omega_j}}=\norm{\cdot}{L^2(\Omega)}$
and~$\norm{\cdot}{\Hta{r}{\Omega_j}}=\snorm{\cdot}{H^r(\Omega)}$, we deduce
\[
\norm{\cS\vecu}{\Ht{s}{\Omega}}^2
=
\sum_{j=1}^N \norm{u_j}{\Hta{s}{\Omega_j}}^2
=
\norm{\vecu}{\wtd\Pi^s}^2, \quad s=0 \ \text{ or } \ s=r.
\]
By interpolation
\[
\norm{\cS\vecu}{\Ht{s}{\Omega}}
\le
\norm{\vecu}{\wtd\Pi^s} \quad\text{for}\quad 0\le s \le r. 
\]

Now for any function~$u\in\Ht{s}{\Omega}$ such that~$u_j\in\Hta{s}{\Omega_j}$,
$j=1,\ldots,N$, where~$u_j$ is the zero extension of~$u|_{\Omega_j}$ 
onto~$\Omega\setminus\ol\Omega_j$, we define~$\vecu=(u_1,\ldots,u_N)$ 
Then $u = \cS\vecu$ because~$\{\Omega_1,\ldots,\Omega_N\}$ is a partition
of~$\Omega$. Consequently
\[
\norm{u}{\Ht{s}{\Omega}}^2
=
\norm{\cS\vecu}{\Ht{s}{\Omega}}^2
\le
\norm{\vecu}{\wtd\Pi^s}^2
=
\sum_{j=1}^N
\norm{u_j}{\Ht{s}{\Omega}}^2,
\]
proving~\eqref{equ:A tvp3}.
\end{proof}

\section{Applications}\label{sec:app}

Proposition~\ref{pro:A 2 nor 3 nor} and Corollary~\ref{cor:tvp} are needed in the
analysis of domain decomposition methods for boundary integral equations. Consider for
example the exterior Neumann boundary value problem
\begin{equation}\label{equ:bvp}
\begin{alignedat}{2}
-\Delta U &= 0 &&\quad\text{in } \R^3\setminus\ol\Omega,
\\
\frac{\pa U}{\pa n_i} &= g_i &&\quad\text{on } \Omega_i, \ i=1,2,
\\
\frac{\pa U}{\pa r} &= o(1/r) &&\quad\text{as } r=|x|\to\infty,
\end{alignedat}
\end{equation}
where~$\Omega$ is a screen in~$\R^3$ and~$\Omega_i$, $i=1,2$, are two sides of~$\Omega$
determined by two opposite normal vectors~$n_i$. It is well known that~\cite{Ste87,SteWen84}
if~$\varphi:=[U]_\Omega$ denotes the jump of~$U$ across the screen~$\Omega$,
then~\eqref{equ:bvp} is equivalent to the boundary integral equation
\begin{equation}\label{equ:hyp}
\hyp\varphi(x) = -g(x), \quad x\in\Omega,
\end{equation}
where~$\hyp$ is the hypersingular integral operator defined by
\[
\hyp\varphi(x) := -\frac{1}{2\pi} \frac{\pa}{\pa n_x} \int_{\Omega} 
\frac{\pa}{\pa n_y} \Big( \frac{1}{|x-y|} \Big) \varphi(y) \ds_y.
\]
It is also well know that \cite{Cos88,Ste87,SteWen84}
that~$\hyp:\Ht{1/2}{\Omega}\to\Hs{-1/2}{\Omega}$ is bijective, where~$\Hs{-1/2}{\Omega}$
is the dual of~$\Ht{1/2}{\Omega}$ with respect to the $L^2$-dual pairing. A weak
formulation for equation~\eqref{equ:hyp} is finding~$\varphi\in\Ht{1/2}{\Omega}$ satisfying
\begin{equation}\label{equ:wea for}
a(\varphi,\psi) = -\inpro{g}{\psi} \quad\forall\psi\in\Ht{1/2}{\Omega}
\end{equation}
where $\inpro{\cdot}{\cdot}$ denotes the $L^2$-inner product and the bilinear 
form~$a(\cdot,\cdot)$ is defined
by~$a(\phi,\psi)=\inpro{\hyp\phi}{\psi}$ for all~$\phi,\psi\in\Ht{1/2}{\Omega}$. 
It is known that this bilinear form defines a norm which is equivalent to the
$\Ht{1/2}{\Omega}$-norm, i.e.,
\begin{equation}\label{equ:bil for}
a(\psi,\psi) \simeq \norm{\psi}{\Ht{1/2}{\Omega}}^2
\quad\forall\psi\in\Ht{1/2}{\Omega}.
\end{equation}
Together with~\eqref{equ:H12 Sob} this implies
\begin{equation}\label{equ:Slo nor}
a(\psi,\psi) \simeq \norm{\psi}{\sim,1/2,\Omega}^2
\quad\forall\psi\in\Ht{1/2}{\Omega}.
\end{equation}

The Galerkin boundary element method applied to equation~\eqref{equ:wea for} results in the
following equation which computes an approximate solution~$\varphi_M\in\cV_M$
\begin{equation}\label{equ:Gal equ}
a(\varphi_M,\psi_M) = -\inpro{g}{\psi_M} \quad\forall\psi_M\in\cV_M
\end{equation}
where~$\cV_M$ is an $M$-dimensional subspace of~$\Ht{1/2}{\Omega}$. It is known that
when~$M\to\infty$ the approximate solution~$\varphi_M$ converges to~$\varphi$
in~$\Ht{1/2}{\Omega}$; see e.g.~\cite{GwiSte18}.

Let~$\{\phi_1,\ldots,\phi_M\}$ be a basis for~$\cV_M$. Then~\eqref{equ:Gal equ} is equivalent to
\[
a(\varphi_M,\phi_j) = -\inpro{g}{\phi_j}, \quad j=1,\ldots,M.
\]
By representing~$\varphi_M$ as~$\varphi_M=\sum_{i=1}^M c_i \phi_i$ we deduce
from the above equations a system of linear equations written in matrix form as
\begin{equation}\label{equ:Axb}
\vecA\vecx = \vecb.
\end{equation}
Here~$\vecA$ is a symmetric matrix of size~$M\times M$ with
entries~$A_{ij}=a(\phi_i,\phi_j)$, $i,j=1,\ldots,M$; the unknown vector~$\vecx$ has 
entries~$x_i=c_i$, and the right-hand side vector~$\vecb$ has
entries~$b_i=-\inpro{g}{\phi_i}$, $i=1,\ldots,M$. 

The matrix~$\vecA$ is positive definite, see e.g.~\cite{GwiSte18}, which guarantees the unique
solution of~\eqref{equ:Axb}. However, when~$M$ is large (which is common in real-life
applications), the quality of the solution of this
equation given by a computer depends on the property of~$\vecA$. We briefly explain this
phenomenon and refer the reader to any textbook on numerical linear algebra for detail.

Let~$\lambda_{\max}(\vecA)$ and~$\lambda_{\min}(\vecA)$ be the maximum and minimum eigenvalues
of~$\vecA$, respectively, and let the condition number~$\kappa(\vecA)$ be defined
by~$\kappa(\vecA)=\lambda_{\max}(\vecA)/\lambda_{\min}(\vecA)$.
The matrix~$\vecA$ is ill-conditioned, namely~$\kappa(\vecA)$ increases significantly with
the size of~$\vecA$. More precisely, it is known that~$\kappa(\vecA)=O(M)$; see e.g.
\cite{AinMcLTra}. Assume that a direct solver (by Gaussian elimination or row reduction
algorithm) is used to solve~\eqref{equ:Axb} on a computer. Due to round-off errors (a computer
can only work with a limited number of digits after the decimal point), the computer
only yields an approximate solution~$\vecx^\ast$ to~$\vecx$. It is known that the
relative error of this approximation is proportional to~$\kappa(\vecA)$; see
\cite[Chapter~3]{GolVan13}. Therefore, when~$M$ is large the computed solution~$\vecx^\ast$ is
not reliable. For example, if~$M$ is in the range from~$10^{5}$ to~$10^{8}$ (which is quite common
in real-life applications), one can expect at most from~$7$ to~$10$ correct digits
of~$\vecx^\ast$ after the decimal point (which is not satisfactory in many
applications).

A better approach for solving~\eqref{equ:Axb} with large~$M$ is using iterative methods, for
example, the conjugate gradient method; see \cite[Chapter~11]{GolVan13}. Starting from an initial
guess~$\vecx_0$, this algorithm yields a sequence~$\{\vecx_k\}$ satisfying
\[
\norm{\vecx_k-\vecx}{\vecA}
\le
2 \Big(
\frac{\kappa(\vecA)-1}{\kappa(\vecA)+1}
\Big)^k
\norm{\vecx_0-\vecx}{\vecA}, \quad k=1,2,\ldots.
\]
Here the norm $\norm{\cdot}{\vecA}$ is defined
by~$\norm{\vecx}{\vecA}=\sqrt{\vecx^\top\vecA\vecx}$ with~$\vecx^\top$ being the transpose
of~$\vecx$. Clearly, when~$M$ is large, so is~$\kappa(\vecA)=O(M)$, and thus we expect a very
large number of iterations~$k$ to obtain a satisfactory solution~$\vecx_k$.

To solve the ill-conditioned system~\eqref{equ:Axb} efficiently, a preconditioner~$\vecC$ is
required. Instead of solving~\eqref{equ:Axb}, one solves
\begin{equation*}\label{equ:CAx}
\vecC^{-1}\vecA\vecx=\vecC^{-1}\vecb,
\end{equation*}
with the preconditioner~$\vecC$ designed such that~$\vecC^{-1}\approx\vecA^{-1}$ so
that~$\kappa(\vecC^{-1}\vecA)$ is much smaller than~$\kappa(\vecA)$. Ideally,
$\kappa(\vecC^{-1}\vecA)$ is bounded with respect to~$M$, or it grows at most logarithmically
with respect to~$M$.

Preconditioners by domain decomposition have been studied for~\eqref{equ:Axb}; see
e.g.~\cite{AinGuo00,AinGuo02,FeiFuhPraSte17b,GraMcL06,Heu96b,Heu96a,Heu01,HeuLeySte07,
HeuSte98c,Tra00,TraSte_h,TraSte_p,TraSte01,TraSte04}. 
The method can be briefly described as follows. Partition the domain~$\Omega$ into
subdomains~$\Omega_1$, \ldots, $\Omega_N$. On each subdomain we
define~$\cV_{M,j}=\cV_M\cap\Ht{1/2}{\Omega_j}$, $j=1,\ldots,N$, and decompose~$\cV_M$ by
\begin{equation}\label{equ:ssd}
\cV_M = \cV_{M,1} + \cdots + \cV_{M,N}.
\end{equation}
A preconditioner~$\vecC$ is defined using this subspace decomposition. To estimate the
condition number~$\kappa(\vecC^{-1}\vecA)$, one needs to show, among other things,
the following two statements, see e.g.~\cite[Chapter~2]{SteTra21},
\begin{enumerate}
\renewcommand{\labelenumi}{\theenumi}
\renewcommand{\theenumi}{{\rm (\roman{enumi})}}
\item\label{ite:sta}
For any~$u\in\cV_M$, there exists a decomposition~$u=u_1+\cdots+u_N$ with~$u_j\in\cV_{M,j}$,
$j=1,\ldots,N$, such that
\begin{equation}\label{equ:min}
C_1 \sum_{j=1}^N a(u_j,u_j) \le a(u,u).
\end{equation}
\item\label{ite:coe}
For any~$u\in\cV_M$ and any decomposition~$u=u_1+\cdots+u_N$ with~$u_j\in\cV_{M,j}$, $j=1,\ldots,N$,
the following inequality holds
\begin{equation}\label{equ:max}
a(u,u) \le C_2 \sum_{j=1}^N a(u_j,u_j).
\end{equation}
\end{enumerate}
The positive constants~$C_1$ and~$C_2$ are independent of~$u\in\cV_M$. Ideally, they are also
independent of the parameter~$M$ determining the size of equation~\eqref{equ:Gal equ}. A less
optimal case is when these constants depend
at most logarithmically on~$M$. Statement~\ref{ite:sta} 
yields~$C_1 \le \lambda_{\min}(\vecC^{-1}\vecA)$ and is called the
stability of the decomposition~\eqref{equ:ssd},
while Statement~\ref{ite:coe} yields~$\lambda_{\max}(\vecC^{-1}\vecA)\le C_2$ and 
is called the coercivity of the decomposition. The condition
number~$\kappa(\vecC^{-1}\vecA)$ is then bounded by~$C_2/C_1$.

Due to~\eqref{equ:bil for} and~\eqref{equ:Slo nor}, either~$\norm{\cdot}{\Ht{1/2}{\Omega}}$ 
or~$\norm{\cdot}{\sim,1/2,\Omega}$ can be used to prove~\eqref{equ:min}
and~\eqref{equ:max}. It turns out that the norm~$\norm{\cdot}{\sim,1/2,\Omega}$ is more
suitable to prove~\eqref{equ:min} while~$\norm{\cdot}{\Ht{1/2}{\Omega}}$ is good
for proving~\eqref{equ:max}. There has been a misconception that
\begin{equation}\label{equ:wro equ}
a(u_j,u_j)\simeq\norm{u_j}{\Ht{1/2}{\Omega_j}}^2\simeq\norm{u_j}{\sim,1/2,\Omega_j}^2
\end{equation}
with universal constants independent of the diameter of~$\Omega_j$. Thus, ubiquitously in the
literature, a common practice has been to use Theorem~\ref{the:A tvp} to prove
\[
\norm{u}{\Ht{1/2}{\Omega}}^2 \le C_2 \sum_{j=1}^N \norm{u_j}{\Ht{1/2}{\Omega_j}}^2
\]
in order to derive~\eqref{equ:max}. Theorem~\ref{the:A Sob loc nor} and 
Proposition~\ref{pro:A 2 nor 3 nor} imply that the equivalences~\eqref{equ:wro equ} hold with
constants depending on the diameter of~$\Omega_j$, which is proportional to~$M$. This may result
in polynomial dependence on~$M$ of the constant~$C_2$. To avoid this unsatisfactory
result, one has to prove
\[
\norm{u}{\Ht{1/2}{\Omega}}^2 \le C_2 \sum_{j=1}^N \norm{u_j}{\Ht{1/2}{\Omega}}^2.
\]
This inequality can be obtained by invoking Corollary~\ref{cor:tvp}.

\begin{remark}
After this article was submitted, the author derived similar results for the Sobolev
space~$\Ht{s}{\Omega}$ of negative order~$s$. These results were reported in
Appendix~A of the monograph~\cite{SteTra21}.
\end{remark}
\section*{Acklowledgement}
We thank the anonymous referees for their constructive remarks. In particular a remark
shortens the proof of Lemma~\ref{lem:A Ome Omep} part~\ref{ite:A xmy dis}.
\bibliographystyle{myabbrv}
\bibliography{mybib}

\end{document}